\newtheorem{theorem}{Theorem}[section]
\newtheorem{corollary}[theorem]{Corollary}
\newtheorem{lemma}[theorem]{Lemma}
\newtheorem{proposition}[theorem]{Proposition}
\theoremstyle{definition}
\numberwithin{equation}{section}
\newcommand{\N}{\mathbb{N}}
\DeclareMathOperator{\forb}{forb}
\DeclareMathOperator{\Avoid}{Avoid}
\newcommand{\bb}{\begin{bmatrix}}
\newcommand{\eb}{\end{bmatrix}}
\newcommand{\sbb}{\left[\begin{smallmatrix}}
\newcommand{\seb}{\end{smallmatrix}\right]}
\title{Exponential multivalued forbidden configurations}
\author{Travis Dillon\affiliationmark{1}\thanks{Research conducted under the auspices of the Budapest Semesters in Mathematics program.}
    \and Attila Sali\affiliationmark{2,3}\thanks{This author's work is partially supported by the National Research, Development and Innovation Office (NKFIH) [grant K--116769 and K--132696]; the National Research, Development and Innovation Fund [TUDFO/51757/2019-ITM, Thematic Excellence Program]; the BME NC TKP2020 grant of NKFIH Hungary; and the BME-Artificial Intelligence FIKP grant of EMMI [BME FIKP-MI/SC]. It is also connected to the ``Development of quality-oriented and harmonized R+D+I strategy and functional model at BME'' project supported by the New Hungary Development Plan [Project ID: TÁMOP-4.2.1/B-09/1/KMR-2010-0002].}
}
\affiliation{
    Lawrence University, WI, USA\\
    Alfr\'ed R\'enyi Institute of Mathematics, Budapest, Hungary\\
    Department of Computer Science, Budapest University of Technology and Economics, Budapest, Hungary
}
\keywords{forbidden configurations, $(0,1)$-matrices, extremal set theory}
\begin{document}
\publicationdetails{23}{2021}{1}{7}{6613}
\maketitle
\begin{abstract}
\noindent
The \textit{forbidden number} $\operatorname{forb}(m,F)$, which denotes the maximum number of unique columns in an $m$-rowed $(0,1)$-matrix with no submatrix that is a row and column permutation of $F$, has been widely studied in extremal set theory. Recently, this function was extended to $r$-matrices, whose entries lie in $\{0,1,\dots,r-1\}$. The combinatorics of the generalized forbidden number is less well-studied. In this paper, we provide exact bounds for many $(0,1)$-matrices $F$, including all $2$-rowed matrices when $r > 3$. We also prove a stability result for the $2\times 2$ identity matrix. Along the way, we expose some interesting qualitative differences between the cases $r=2$, $r = 3$, and $r > 3$.
\end{abstract}

\section{Introduction}
We call a matrix \emph{simple} if it has no repeated columns. Every set system (or simple hypergraph) corresponds to a simple $(0,1)$-matrix via its element-set incidence matrix, and such matrices provide a convenient language for extremal set theory. We generalize this situation to $r$-\emph{matrices}, which have entries in $\{0,1,\ldots, r-1\}$. Such matrices can be thought of as $r$-coloured set systems or as representations of collections of functions from a given finite set into $\{0,1,\dots,r-1\}$.

For two  matrices $F$ and $A$, we say that $F$ is a \textit{configuration} of $A$, denoted $F \prec A$, if $A$ contains a submatrix which is a row and column permutation of $F$. If $F\nprec A$, we say that $A$ \textit{avoids} $F$. Configurations of simple $(0,1)$-matrices correspond to traces of set systems or hypergraphs. For a given finite collection $\mathcal F$ of matrices, we denote by $\Avoid(m,r,{\mathcal F})$ the collection of $m$-rowed, simple $r$-matrices that avoid every matrix $F \in \mathcal F$. We let $\lvert A \rvert$ denote the number of columns of $A$. The main extremal function in the study of forbidden configurations is
\[
    \forb(m,r,{\mathcal F})=\max\{|A| : A \in \Avoid(m,r,{\mathcal F})\}.
\]
When $r=2$ we usually write $\forb(m,{\mathcal F})$ in place of $\forb(m,2,{\mathcal F})$. We also use $\forb(m,r,F)$ instead of the more cumbersome $\forb(m,r,\{F\})$. We will use several simple properties of this function. For example, if $F \prec F'$ then $\forb(m,r,F) \leq \forb(m,r,F')$. Also, we let $F^c$ denote the \textit{complement} of the $(0,1)$-matrix $F$, where each 0 is replaced by a 1 and vice versa; then $\forb(m,r,F) = \forb(m,r,F^c)$.

The foundational result in the theory of forbidden configurations is Sauer's theorem (proven in \cite{Sauer}, also by Perles and Shelah \cite{Shelah} and Vapnik and Chervonenkis \cite{VC}). Let $K_k$ denote the complete $k\times 2^k$ simple $(0,1)$-configuration (corresponding to the power set of a $k$-element set).
\begin{theorem}\label{Sauer's Thm} For every positive integer $m$,
\[
\forb(m,K_k) = \binom{m}{k-1} + \binom{m}{k-2} + \cdots + \binom{m}{1} + \binom{m}{0}.
\]
\end{theorem}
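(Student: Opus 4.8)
The plan is to establish matching upper and lower bounds, both equal to $f(m,k) := \binom{m}{k-1} + \cdots + \binom{m}{0}$. For the lower bound I would exhibit an explicit extremal matrix: let $A$ consist of all distinct $(0,1)$-columns of height $m$ having at most $k-1$ entries equal to $1$. This matrix is simple and has exactly $\sum_{i=0}^{k-1}\binom{m}{i}$ columns. It avoids $K_k$ because any choice of $k$ rows would, to realize $K_k$, have to display the all-ones pattern $\bb 1 \\ \vdots \\ 1 \eb$ among its $2^k$ required columns, which would force some column of $A$ to have at least $k$ ones—impossible by construction. Hence $\forb(m,K_k) \ge f(m,k)$.

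For the upper bound I would induct on $m$, allowing $k$ to decrease, via the ``standard decomposition.'' Given a simple $A \in \Avoid(m,2,K_k)$, single out the last row. Let $A'$ be the simple $(m-1)$-rowed matrix obtained by deleting row $m$ and discarding duplicate columns, and let $A''$ be the $(m-1)$-rowed matrix consisting of exactly those columns $c$ (on rows $1,\dots,m-1$) for which both $\sbb c \\ 0 \seb$ and $\sbb c \\ 1 \seb$ occur in $A$. A short counting argument—splitting the columns of $A$ according to whether their projection onto rows $1,\dots,m-1$ lifts to one or two columns—gives $|A| = |A'| + |A''|$. Since $A'$ sits inside rows $1,\dots,m-1$ of $A$, it again avoids $K_k$, so $|A'| \le \forb(m-1,K_k)$.

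The key step, which I expect to be the main obstacle, is to argue that $A''$ avoids $K_{k-1}$. Suppose some $k-1$ rows of $A''$ contained all $2^{k-1}$ patterns. Because every column of $A''$ extends, by definition, to both a $0$ and a $1$ in row $m$ of $A$, those $k-1$ rows together with row $m$ would exhibit all $2^k$ patterns in $A$, contradicting $K_k \nprec A$. Thus $|A''| \le \forb(m-1,K_{k-1})$, giving $\forb(m,K_k) \le \forb(m-1,K_k) + \forb(m-1,K_{k-1})$. Since Pascal's identity yields $f(m,k) = f(m-1,k) + f(m-1,k-1)$, the bound $f(m,k)$ propagates by induction once the base cases are settled—most importantly $k=1$, where avoiding $K_1 = \bb 0 & 1 \eb$ forces every simple matrix to have a single column, so $\forb(m,K_1) = 1 = \binom{m}{0}$. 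Combining the construction with this inductive bound gives the claimed equality.
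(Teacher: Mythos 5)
The paper does not prove this statement at all: it is quoted as Sauer's theorem with citations to Sauer, Perles--Shelah, and Vapnik--Chervonenkis, so there is no in-paper argument to compare against. Your proposal is the standard and correct proof via the decomposition $|A| = |A'| + |A''|$ with $A'$ avoiding $K_k$ and $A''$ avoiding $K_{k-1}$, combined with Pascal's identity and the lower-bound construction of all columns of weight at most $k-1$. The only loose end is that the induction on $m$ also needs its own base case, but this is immediate: for $m < k$ every simple $m$-rowed matrix avoids $K_k$, so $\forb(m,K_k) = 2^m = \sum_{i=0}^{k-1}\binom{m}{i}$, matching the formula (a fact the paper itself records in \cref{sec:general-bounds}). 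With that remark added, your argument is complete.
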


Alon \cite{ALON} gave a generalization for complete $r$-matrices, but the forbidden number is exponential when $r > 2$. This is a special case of a more general phenomenon proved by F\"uredi and Sali \cite{FS}.

\begin{theorem}
Let ${\mathcal F}$ be a family of $r$-matrices. If for every pair $i,j\in\{0,1,\dots,r-1\}$ there is an $(i,j)$-matrix in ${\mathcal F}$, then $\forb(m,r,{\mathcal F}) = O(m^k)$ for some positive integer $k$. If ${\mathcal F}$ has no $(i,j)$-matrix for some pair $i,j \in \{0,1,\dots,r-1\}$, then $\forb(m,r,{\mathcal F}) = \Omega(2^m)$.
\end{theorem}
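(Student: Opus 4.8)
My plan is to treat the two directions separately, proving the lower bound by an explicit construction and the upper bound by a standard-decomposition induction that reduces everything to a multivalued Sauer-type estimate. For the lower bound, suppose some pair $i,j \in \{0,\dots,r-1\}$ admits no $(i,j)$-matrix in $\mathcal F$, and let $B$ be the $m \times 2^m$ simple matrix whose columns are exactly the vectors in $\{i,j\}^m$. Every $F \in \mathcal F$ must contain an entry outside $\{i,j\}$ (otherwise it would itself be an $(i,j)$-matrix), whereas every entry of $B$, and hence of every submatrix of $B$, lies in $\{i,j\}$; thus $F \nprec B$ for all $F \in \mathcal F$. Therefore $B \in \Avoid(m,r,\mathcal F)$, so $\forb(m,r,\mathcal F) \ge |B| = 2^m = \Omega(2^m)$.

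For the upper bound I would first replace the arbitrary forbidden matrices by complete two-valued configurations. For a pair $i<j$, let $K^{(i,j)}_s$ denote the complete $s \times 2^s$ configuration with entries in $\{i,j\}$ (the $(i,j)$-analogue of $K_s$), and let $t\cdot K^{(i,j)}_s$ denote the matrix obtained by repeating each column $t$ times. If $F_{ij} \in \mathcal F$ is an $(i,j)$-matrix with $p_{ij}$ rows and $q_{ij}$ columns, then every column of $F_{ij}$ is a column of $K^{(i,j)}_{p_{ij}}$, so $F_{ij} \prec q_{ij}\cdot K^{(i,j)}_{p_{ij}}$; consequently any matrix that avoids $F_{ij}$ also avoids $q_{ij}\cdot K^{(i,j)}_{p_{ij}}$. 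It therefore suffices to bound $g(m,\mathbf s)$, the maximum number of columns in a simple $m$-rowed $r$-matrix that avoids $t_{ij}\cdot K^{(i,j)}_{s_{ij}}$ for every pair $i<j$ (with fixed multiplicities $t_{ij}$), and to prove $g(m,\mathbf s) = O\!\left(m^{\max_{ij} s_{ij}}\right)$.

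The heart of the argument is the recursion $g(m,\mathbf s) \le g(m-1,\mathbf s) + \sum_{i<j} g(m-1,\mathbf s^{(ij)})$, where $\mathbf s^{(ij)}$ lowers the $(i,j)$-coordinate by one. To obtain it, take an extremal $A$ and apply the standard decomposition to its first row: for each tail $\alpha \in \{0,\dots,r-1\}^{m-1}$ let $\mu(\alpha)$ be the number of values $v$ with $\bb v\\ \alpha\eb$ a column of $A$, let $U$ be the simple $(m-1)$-matrix of all tails, and for $i<j$ let $D_{ij}$ be the submatrix of tails occurring with both $v=i$ and $v=j$. Then $|A| = |U| + \sum_\alpha(\mu(\alpha)-1) \le |U| + \sum_{i<j}|D_{ij}|$, using $\mu(\alpha)-1 \le \binom{\mu(\alpha)}{2}$, and since $U$ inherits every forbidden configuration of $A$ we get $|U| \le g(m-1,\mathbf s)$. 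For $D_{ij}$, observe that $A$ contains $\bb i \cdots i & j\cdots j\\ D_{ij} & D_{ij}\eb$; a short doubling argument shows that if $t_{ij}\cdot K^{(i,j)}_{s_{ij}-1} \prec D_{ij}$ on some $s_{ij}-1$ rows, then prefixing those columns with the top row of $i$'s and $j$'s produces $t_{ij}\cdot K^{(i,j)}_{s_{ij}}$ in $A$, a contradiction. Crucially, this inspects only $s_{ij}-1$ rows of $D_{ij}$ and needs only that those entries lie in $\{i,j\}$, so it is insensitive to the fact that the tails range over the full alphabet; and for every other pair $D_{ij}\subseteq U$ still avoids the corresponding complete configuration. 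Hence $|D_{ij}| \le g(m-1,\mathbf s^{(ij)})$, which yields the recursion.

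Finally I would solve the recursion by induction (inner induction on $\sum_{i<j} s_{ij}$, outer on $m$), with the base case $s_{ij}=0$ forcing fewer than $t_{ij}$ columns, to conclude $g(m,\mathbf s) = O\!\left(m^{\max_{ij}s_{ij}}\right)$; substituting $s_{ij}=p_{ij}$ then gives $\forb(m,r,\mathcal F) = O(m^k)$ with $k = \max_{ij} p_{ij}$. I expect the main obstacle to be the doubling lemma together with the realization that one should force the \emph{complete} two-valued configuration $K^{(i,j)}_{s}$ rather than $F_{ij}$ itself: this is exactly what lets the induction proceed even though the tails in $D_{ij}$ are multivalued, since a complete two-valued configuration only constrains a few rows and only their $\{i,j\}$-entries. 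Making the multi-parameter recursion and its base cases interlock correctly is the part that will require the most care.
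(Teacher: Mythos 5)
The paper does not actually prove this theorem; it is quoted from F\"uredi and Sali \cite{FS} without proof, so there is no internal argument to compare against. Judged on its own, your proposal follows what is essentially the standard route (explicit two-valued construction for the lower bound; standard decomposition on a row plus reduction to complete two-valued configurations for the upper bound), and the individual steps are sound: the matrix of all $\{i,j\}^m$ columns does avoid every member of $\mathcal F$ when no $(i,j)$-matrix is present, the inequality $|A|\le |U|+\sum_{i<j}|D_{ij}|$ via $\mu(\alpha)-1\le\binom{\mu(\alpha)}{2}$ is correct, and the doubling argument showing that $t_{ij}\cdot K^{(i,j)}_{s_{ij}-1}\prec D_{ij}$ would force $t_{ij}\cdot K^{(i,j)}_{s_{ij}}\prec A$ is valid.

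The one genuine flaw is the claimed exponent. Your recursion $g(m,\mathbf s)\le g(m-1,\mathbf s)+\sum_{i<j}g(m-1,\mathbf s^{(ij)})$ does \emph{not} yield $g(m,\mathbf s)=O(m^{\max_{ij}s_{ij}})$, and the induction as you describe it will not close with that hypothesis. Telescoping in $m$ gives $g(m,\mathbf s)\le g(0,\mathbf s)+\sum_{m'\le m}\sum_{i<j}g(m'-1,\mathbf s^{(ij)})$, so each pass through the recursion costs one extra power of $m$, while decrementing a single coordinate $s_{ij}$ need not decrease $\max_{kl}s_{kl}$ (already with two pairs both equal to $s$ the maximum is unchanged after one step). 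What the recursion actually delivers is a polynomial bound whose degree is controlled by the total decrement budget, roughly $1+\sum_{i<j}(s_{ij}-1)$, i.e.\ about $\sum_{i<j}p_{ij}-\binom{r}{2}+1$ rather than $\max_{ij}p_{ij}$. Since the theorem only asserts $\forb(m,r,\mathcal F)=O(m^k)$ for \emph{some} positive integer $k$, this is a repairable overclaim rather than a fatal error: replace the target exponent by $\sum_{i<j}s_{ij}$ (or any quantity that strictly drops at each recursive call) and run the inner induction on that sum, and the argument goes through.
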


Extensive investigations have been undertaken for forbidden configurations of simple $(0,1)$-matrices; see, for example, the excellent dynamic survey of Anstee \cite{survey}. On the other hand, the more general case of $r$-matrices is not so well-explored. Previous papers mainly focus on providing bounds on the forbidden number for special classes of sets in the polynomial case \cite{ansteelu,ELLIS202024}. In this paper, we dive into exponential forbidden numbers and provide exact bounds when $(0,1)$-configurations of $r$-matrices are forbidden. This is similar in flavour to a recent paper of F\"uredi, Kostochka, and Luo \cite{furedi-exponential}, which proves several minimum-degree conditions that guarantee cycles in hypergraphs; by dropping the assumption of uniformity, their bounds jump from polynomial to exponential.

The structure of the paper is as follows. \cref{sec:general-bounds} provides a method to transfer bounds for $r=2$ to larger values of $r$.  The following three sections calculate forbidden numbers of specific classes of matrices. We obtain exact results when $r>3$ and bounds for $r=3$ that differ from the forbidden number by an additive constant. We also prove a stability result for the identity configuration. Our work culminates in \cref{sec:two-rows}, which provides exact forbidden numbers for all two-rowed $(0,1)$-configurations for every $r>3$ and a large class of two-rowed $(0,1)$ configurations when $r=3$. The main tool in both cases a reduction lemma. Finally, \cref{sec:three-rows} applies the method of \cref{sec:general-bounds} to obtain a nearly complete classification of $(0,1)$-configurations of size $3\times 2$ and $3 \times 3$.

\section{General bounds}\label{sec:general-bounds}

For a given configuration $A$, let $\bar{A}$ denote its underlying simple configuration. If $A$ has $m$ columns and $S \subseteq [m]$, then we let $A\vert_S$ be the restriction of $A$ to the rows with indices in $S$. By convention, we set $\forb(0,F) = 1$ for all $F$. In general, if $F$ has $t$ rows, then $\forb(k,F) = 2^k$ when $0 \leq k < t$.

\begin{lemma}\label{thm:upper-01-bound}
If $F$ is a $(0,1)$-matrix and $r \geq 3$, then 
\begin{equation}\label{eqn:upper-01-bound}
\forb(m,r,F) \leq \sum_{k=0}^m \binom{m}{k}(r-2)^{m-k}\forb(k,F).
\end{equation}
\end{lemma}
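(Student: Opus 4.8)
The plan is to take an arbitrary $A \in \Avoid(m,r,F)$ and partition its columns according to where their entries lie in $\{0,1\}$ versus $\{2,\dots,r-1\}$, then to recognize each block of the partition as a simple $(0,1)$-matrix that avoids $F$. The guiding observation is that since $F$ has only the entries $0$ and $1$, any copy of $F$ inside $A$ must occupy positions where $A$ itself is $0$ or $1$; the ``large'' symbols $2,\dots,r-1$ are inert and can be factored out combinatorially. This is the source of the $(r-2)^{m-k}$ factor in \eqref{eqn:upper-01-bound}, and it suggests that no induction on $m$ is needed.

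Concretely, for each column $c$ of $A$ I would let $S(c) \subseteq [m]$ be the set of rows in which $c$ takes a value in $\{0,1\}$, and let $\phi_c \colon [m]\setminus S(c) \to \{2,\dots,r-1\}$ record the remaining entries. First I would group the columns of $A$ by the pair $(S(c),\phi_c)$. Every column lies in exactly one group, so $|A|$ equals the sum of the group sizes. For a fixed row set $S$ with $|S|=k$ and a fixed map $\phi$, let $B_{S,\phi}$ be the $(0,1)$-matrix obtained by restricting the columns of that group to the rows in $S$.

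The key step is to check that each $B_{S,\phi}$ is a simple $(0,1)$-matrix on $k$ rows that avoids $F$, so that $|B_{S,\phi}| \le \forb(k,F)$. Simplicity holds because two columns in the same group already agree off $S$, so if they also agreed on $S$ they would be identical in $A$, contradicting that $A$ is simple. Avoidance holds because any configuration $F \prec B_{S,\phi}$ uses only rows of $S$ and columns of $A$, and hence yields $F \prec A$, a contradiction; the boundary conventions $\forb(k,F)=2^k$ for $k$ below the number of rows of $F$ make this estimate valid even when $S$ is too small to contain $F$. Counting the choices — there are $\binom{m}{k}$ row sets $S$ of size $k$ and $(r-2)^{m-k}$ maps $\phi$ on the complement — and summing over $k$ then reproduces \eqref{eqn:upper-01-bound} exactly.

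I expect the only real subtlety to be bookkeeping: verifying that grouping by $(S(c),\phi_c)$ is genuinely a partition (each column counted once), that the per-group restriction $B_{S,\phi}$ inherits both simplicity and $F$-avoidance from $A$, and that the degenerate small-$k$ cases are absorbed by the stated conventions rather than requiring separate treatment. The conceptual content — that a $(0,1)$-configuration cannot ``see'' the symbols $2,\dots,r-1$ — is what makes the decomposition clean, so I anticipate no genuine difficulty beyond presenting these checks carefully.
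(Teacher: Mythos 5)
Your proposal is correct and follows essentially the same route as the paper: both partition the columns of $A$ according to the set of rows where the entries are binary, bound each piece by the classical $\forb(k,F)$ using simplicity of $A$, and account for the non-binary entries with the factor $(r-2)^{m-k}$. The only cosmetic difference is that you refine the partition by the map $\phi_c$ as well, whereas the paper fixes only the row set $X$ and bounds the multiplicity of each column of $\overline{C\vert_X}$ by $(r-2)^{m-k}$; the two bookkeeping schemes yield the identical bound.
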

\begin{proof}
Let $A \in \Avoid(m,r,F)$, and let $X$ be a $k$-element subset of the rows. Consider the matrix $C$ obtained by taking all columns of $A$ that have 0's and 1's in exactly the rows in $X$, and let $C' = \overline{C\vert_X}$. We know that $\lvert C' \rvert \leq \forb(k,F)$. Each column in $C'$ appears with multiplicity at most $(r-2)^{m-k}$ in $C\vert_X$, so $\lvert C \rvert \leq (r-2)^{m-k}\forb(k,F)$. To finish the proof, we sum over all subsets of the rows.
\end{proof}

The bound given by this lemma may be quite bad, especially if $F$ is not simple. However, for simple matrices, we have the following lower bound.

\begin{lemma}\label{thm:lower-01-bound}
Let $F$ be a simple $(0,1)$-matrix with $n$ rows and fix $r \geq 3$. Suppose that $(A_k)_{k=1}^\infty$ is a sequence of $(0,1)$-matrices that avoids $F$, where $A_k$ has $k$ rows, such that $\overline{A_k\vert_S} \subseteq A_{n}$ for every $k \geq n$ and $S \in \binom{[k]}{n}$. If we set $\lvert A_0 \rvert = 1$, then
\begin{equation}
    \forb(m,r,F) \geq \sum_{k=0}^m \binom{m}{k}(r-2)^{m-k} \lvert A_k\rvert.
\end{equation}
\end{lemma}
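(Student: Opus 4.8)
The plan is to prove the inequality by constructing an explicit simple $r$-matrix $A \in \Avoid(m,r,F)$ whose number of columns is exactly the right-hand side; this mirrors the row-partition appearing in the proof of \cref{thm:upper-01-bound}. Concretely, for each subset $X \subseteq [m]$ with $|X| = k$, I would place a copy of $A_k$ on the rows indexed by $X$ (using the order-preserving identification of $X$ with $[k]$) and, on the remaining $m-k$ rows, append every one of the $(r-2)^{m-k}$ vectors with entries in $\{2,3,\dots,r-1\}$. Ranging over all $X$ and all such fillings produces a candidate matrix $A$ with $\sum_{k=0}^m \binom{m}{k}(r-2)^{m-k}|A_k|$ columns, provided these columns are distinct; here I would take each $A_k$ to be simple, as is natural, so that the count is exact.

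Establishing that $A$ is simple is routine: from any column of $A$ one recovers the set $X$ as precisely the set of rows whose entry lies in $\{0,1\}$, so columns built from different subsets are automatically distinct, and within a fixed $X$ distinctness follows from the simplicity of $A_k$ together with the distinctness of the fillings.

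The crux is to verify that $A$ avoids $F$. Suppose instead that $F \prec A$, witnessed by a set $R$ of $n$ rows and a set of columns of $A$. Because $F$ is a $(0,1)$-matrix, every column $c$ participating in this copy must have all of its entries on $R$ in $\{0,1\}$; by the construction this forces $R \subseteq X_c$, where $X_c$ is the set of rows on which $c$ takes values in $\{0,1\}$. Setting $k_c = |X_c| \geq n$ and letting $S_c \in \binom{[k_c]}{n}$ be the image of $R$ under the order-preserving identification of $X_c$ with $[k_c]$, the restriction $c|_R$ is, under this identification, a column of $A_{k_c}|_{S_c}$. The hypothesis $\overline{A_{k_c}|_{S_c}} \subseteq A_n$ then says that each such $c|_R$ appears as a column of $A_n$ on the matched rows. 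Consequently the $n$-rowed submatrix of $A$ witnessing the copy embeds columnwise into $A_n$, so $F \prec A_n$, contradicting that $A_n$ avoids $F$.

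The main obstacle will be bookkeeping the row identifications in this last step: the columns forming the copy of $F$ may come from subsets $X_c$ of different sizes, so one must check that applying the containment hypothesis $\overline{A_k|_S}\subseteq A_n$ to each column separately still yields a single common embedding of the whole submatrix into $A_n$. This works precisely because the hypothesis holds for every $S \in \binom{[k]}{n}$ and every $k \geq n$, and because the fixed row set $R$ matches row $i$ of $A_n$ to the $i$-th smallest element of $R$ uniformly across all of the participating columns.
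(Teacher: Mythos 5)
Your proposal is correct and takes essentially the same approach as the paper: the identical construction (a copy of $A_k$ on each $k$-set of rows, padded with every filling from $\{2,\dots,r-1\}$ on the remaining rows) followed by the same avoidance argument, namely that any copy of the $(0,1)$-matrix $F$ must sit inside binary entries and hence pulls back to $A_n$ via the containment hypothesis. The paper's version is merely terser; your added care about row identifications and simplicity of the $A_k$ fills in details the paper leaves implicit.
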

\begin{proof}
We construct a configuration that avoids $F$ as follows. Let $k \in [m]$. For each $k$-set $X$ of rows, we choose the $(r-2)^{m-k}$ columns that contain a copy of $A_k$ in the rows of $X$ and have elements of $\{2,\dots,r-1\}$ in every other position. Let $A$ be the configuration that contains all such columns. If $F \prec A$, then $F \prec A\vert_{S}$ for some $n$-set of rows $S$. But every column in $A\vert_S$ appears in $A_n$, so $F \prec A_n$, a contradiction.
\end{proof}

The condition that $F$ is simple is absolutely essential. For simple matrices, however, this lemma can easily extend bounds from the classical case to the generalized one. In particular, combining \cref{thm:upper-01-bound,thm:lower-01-bound} proves the following.

\begin{lemma}\label{thm:01-equal}
Let $F$ be a simple $n$-rowed $(0,1)$-matrix. If there exists a sequence $(A_k)_{k=1}^\infty$ of $(0,1)$-matrices, each of which avoids $F$, such that\vspace{-1.3ex}
\begin{itemize}
    \setlength{\itemsep}{1pt}
    \setlength{\parskip}{0pt}
    \setlength{\parsep}{0pt}
    \item[\raisebox{1pt}{$\circ$}] $A_k$ has $k$ rows,\vspace{0.1em}
    \item[\raisebox{1pt}{$\circ$}] $\lvert A_k\rvert = \forb(k,F)$, and\vspace{0.2em}
    \item[\raisebox{1pt}{$\circ$}] $\overline{A_k|_S}$ is contained in $A_n$ for every $k\geq n$ and $n$-set $S \subseteq [k]$, then
\end{itemize}
\begin{equation}
    \forb(m,r,F) = \sum_{k=0}^m \binom{m}{k}(r-2)^{m-k}\forb(k,F).
\end{equation}
\end{lemma}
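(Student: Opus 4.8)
The plan is to obtain the claimed equality as a squeeze between the two preceding lemmas, so the work is almost entirely bookkeeping. First I would invoke \cref{thm:upper-01-bound} verbatim: since $F$ is a $(0,1)$-matrix and $r \geq 3$, it gives
\[
\forb(m,r,F) \leq \sum_{k=0}^m \binom{m}{k}(r-2)^{m-k}\forb(k,F),
\]
which is exactly the upper half of the desired identity and requires no hypotheses on the sequence $(A_k)$ whatsoever.

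For the reverse inequality I would verify that the hypothesized sequence $(A_k)$ meets every requirement of \cref{thm:lower-01-bound}. By assumption each $A_k$ has $k$ rows and avoids $F$, and $\overline{A_k|_S}$ is contained in $A_n$ for every $k \geq n$ and every $n$-set $S \subseteq [k]$; these are precisely the three conditions imposed there. Applying that lemma then yields
\[
\forb(m,r,F) \geq \sum_{k=0}^m \binom{m}{k}(r-2)^{m-k}\lvert A_k\rvert.
\]
Substituting the assumed equality $\lvert A_k\rvert = \forb(k,F)$ turns the right-hand side into the same sum that appears in the upper bound.

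Finally I would reconcile the conventions at the boundary term $k=0$: \cref{thm:lower-01-bound} sets $\lvert A_0\rvert = 1$ while \cref{thm:upper-01-bound} (and the running convention) sets $\forb(0,F)=1$, so the two $k=0$ terms agree and the two estimates are literally the same expression. Combining them gives the stated equality. I do not anticipate any genuine obstacle here, since all of the real content lives in \cref{thm:upper-01-bound,thm:lower-01-bound}; the only care needed is to confirm that the stated hypotheses on $(A_k)$ line up exactly with what the lower bound demands and that the boundary term is handled consistently across both conventions.
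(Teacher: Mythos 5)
Your proposal is correct and matches the paper exactly: the paper gives no separate proof, stating only that the lemma follows by combining \cref{thm:upper-01-bound,thm:lower-01-bound}, which is precisely your squeeze argument. Your extra care about the $k=0$ convention and about checking that the hypotheses (including simplicity of $F$) match those of \cref{thm:lower-01-bound} is sound and consistent with the paper.
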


\section{Complete configurations}\label{sec:complete-configurations}

\begin{proposition}\label{thm:complete-config}
We have $\forb(m,r,K_k) = \sum_{i=0}^{k-1} \binom{m}{i} (r-1)^{m-i}$.
If $(r-1)^{m-k} \geq p-1$, then $\forb(m,r,p\cdot K_k) = \sum_{i=0}^{k-1}\binom{m}{i}(r-1)^{m-i} + (p-1)\binom{m}{k}$.
\end{proposition}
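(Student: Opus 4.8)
The plan is to handle the two formulas separately, getting the single-copy value $\forb(m,r,K_k)$ from the transfer machinery of \cref{thm:01-equal} and then bootstrapping to $p$ copies. For the first formula I would apply \cref{thm:01-equal} to the sequence in which $A_j$ is the $j$-rowed $(0,1)$-matrix of all columns having at most $k-1$ ones. Such an $A_j$ avoids $K_k$, since no $k$ rows can exhibit the all-ones column (that would force $k$ ones); it has $\sum_{i=0}^{k-1}\binom ji=\forb(j,K_k)$ columns by Sauer's theorem; and restricting a column of weight at most $k-1$ to any $k$ rows keeps its weight at most $k-1$, so $\overline{A_j|_S}\subseteq A_k$ as required. \cref{thm:01-equal} then gives $\forb(m,r,K_k)=\sum_{j=0}^m\binom mj(r-2)^{m-j}\sum_{i=0}^{k-1}\binom ji$, and the remaining work is a binomial identity: exchanging the two sums, rewriting $\binom mj\binom ji=\binom mi\binom{m-i}{j-i}$, and summing the inner series to $((r-2)+1)^{m-i}$ collapses the double sum to $\sum_{i=0}^{k-1}\binom mi(r-1)^{m-i}$.

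For the $p$-copy formula the two bounds require different ideas. The lower bound is explicit: take the matrix of all columns with at most $k-1$ ones, using symbols $\{0,2,\dots,r-1\}$ off the ones, which already gives $\sum_{i<k}\binom mi(r-1)^{m-i}$ columns; then, for each $k$-set $S$, adjoin $p-1$ columns that are $1$ on all of $S$ and take values in $\{0,2,\dots,r-1\}$ elsewhere. These new columns are pairwise distinct (for fixed $S$ they differ off $S$, for different $S$ their sets of ones differ) and distinct from the base matrix (whose columns have fewer than $k$ ones). There are $(r-1)^{m-k}$ admissible off-$S$ parts, so this is precisely where the hypothesis $(r-1)^{m-k}\ge p-1$ is needed to guarantee $p-1$ choices. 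On any $k$-set $T$ the all-ones pattern is produced only by the $p-1$ adjoined columns belonging to $S=T$, so it occurs fewer than $p$ times and the matrix avoids $p\cdot K_k$; its column count is $\sum_{i<k}\binom mi(r-1)^{m-i}+(p-1)\binom mk$.

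The upper bound, which I expect to be the crux, I would prove by peeling off one copy of $K_k$ at a time. The naive approach of deleting a row fails here: the resulting recursion multiplies the surplus by $(r-1)$ and overshoots the target by a positive multiple of $(p-1)\binom{m-1}{k}$. Instead, given a simple $A$ that avoids $p\cdot K_k$, call a $k$-set $S$ critical if every $(0,1)$-pattern appears at least $p-1$ times in $A|_S$; avoidance forces the minimum multiplicity on a critical $S$ to equal $p-1$, so I can fix one column $c_S$ realizing a minimizing pattern. Deleting the at most $\binom mk$ columns $c_S$ lowers that multiplicity to $p-2$ on every critical set while raising no multiplicity anywhere, so the result avoids $(p-1)\cdot K_k$; hence $\forb(m,r,p\cdot K_k)\le\binom mk+\forb(m,r,(p-1)\cdot K_k)$, and iterating down to the first-part value $\forb(m,r,K_k)$ finishes the bound. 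Notably this deletion argument uses no arithmetic hypothesis, so the upper bound holds for every $p$; the condition $(r-1)^{m-k}\ge p-1$ enters only through the construction, which is exactly why the statement carries it.
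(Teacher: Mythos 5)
Your proof is correct and follows essentially the same route as the paper: the single-copy formula via \cref{thm:01-equal} applied to the complete configuration of columns of weight at most $k-1$, the same extremal construction for the lower bound (up to $0/1$-complementation), and a deletion argument for the upper bound exploiting the row-symmetry of $K_k$. The only cosmetic difference is that you peel off one copy of $K_k$ per round, deleting at most $\binom{m}{k}$ columns $p-1$ times, whereas the paper deletes all $(p-1)\binom{m}{k}$ low-multiplicity columns in a single step; both yield the identical bound.
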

\begin{proof}
We first prove that $\forb(m,r,K_k) = \sum_{i=0}^{k-1}\binom{m}{i}(r-1)^{m-i}$. Let $A_n$ denote the $n$-rowed configuration that contains every column with at most $k-1$ zeros. Then $(A_n)$ satisfies the conditions of \cref{thm:01-equal}, so \cref{Sauer's Thm} implies that
\begin{align*}
    \forb(m,r,K_k)
    &= \sum_{n=0}^{m} \binom{m}{n} (r-2)^{m-n} \sum_{i=0}^{k-1} \binom{n}{i}\\
    &= \sum_{i=0}^{k-1} \binom{m}{i} \sum_{n=0}^m \binom{m-i}{n-i} (r-2)^{m-n}\\
    &= \sum_{i=0}^{k-1} \binom{m}{i}(r-1)^{m-i}.
\end{align*}

Now we prove the forbidden number for all $p$. The configuration that contains every column with at most $k-1$ zeros avoids $K_k$. If $(r-1)^{m-k} \geq p-1$, for each $k$-set of rows, we may append $p-1$ columns to this matrix that have zeros in that $k$-set and nowhere else. The resulting configuration avoids $p\cdot K_k$ and has $\sum_{i=0}^{k-1}\binom{m}{i}(r-1)^{m-i} + (p-1)\binom{m}{k}$ columns.

Now suppose that $A \in \Avoid(m,r,p\cdot K_k)$. For each $k$-set $X$ of rows, there is a column of $K_k$ that appears at most $p-1$ times in $A\vert_X$. Let $A'$ be the configuration obtained by deleting the corresponding columns of $A$ for all $k$-sets. Since $K_k$ is symmetric, no row-permutation of $K_k$ is a subset of $A'\vert_X$, so $K_k \nprec A'\vert_X$ for every $k$-set $X$. Therefore $K_k \nprec A'$, which implies that
\[
    \lvert A\rvert
    \leq \lvert A'\rvert + (p-1)\binom{m}{k}
    \leq \sum_{n=0}^{k-1}\binom{m}{n}(r-1)^{m-n} + (p-1)\binom{m}{k},
\]
as claimed.
\end{proof}

\cref{thm:complete-config} is enough to determine the logarithmic growth rate of $\forb(m,r,F)$ asymptotically for every $(0,1)$-configuration $F$.

\begin{corollary}\label{thm:asymptotic-log}
    The asymptotic formula $\log \forb(m,r,F) \sim m \log(r-1)$ holds as $m\to \infty$ for every fixed $(0,1)$-configuration $F$ and $r \geq 3$.
\end{corollary}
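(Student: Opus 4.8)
The plan is to sandwich $\forb(m,r,F)$ between two quantities whose logarithms are both asymptotic to $m\log(r-1)$, and then apply a squeeze. Throughout, observe that $r\geq 3$ guarantees $\log(r-1)>0$, so the ratio appearing in the statement is well-defined; this is exactly where the hypothesis $r\geq3$ enters, since for $r=2$ the claim would be degenerate.

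For the upper bound, let $k$ be the number of rows of $F$. Because $K_k$ contains every $(0,1)$-column of height $k$, we have $F\prec K_k$, so monotonicity of $\forb$ gives $\forb(m,r,F)\leq\forb(m,r,K_k)$. By \cref{thm:complete-config}, $\forb(m,r,K_k)=\sum_{i=0}^{k-1}\binom{m}{i}(r-1)^{m-i}$. Since $k$ is fixed, I would bound this crudely: each of the $k$ summands is at most $\binom{m}{k-1}(r-1)^m\leq m^{k-1}(r-1)^m$, so $\forb(m,r,F)\leq k\,m^{k-1}(r-1)^m$. Taking logarithms yields $\log\forb(m,r,F)\leq m\log(r-1)+(k-1)\log m+\log k$, and dividing by $m\log(r-1)$ shows that the $\limsup$ of the ratio is at most $1$; the polynomial factors are absorbed after taking logs.

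For the lower bound I would exhibit an explicit large $r$-matrix that avoids $F$. If $F$ contains at least one $0$ entry, let $A$ be the simple matrix consisting of all $(r-1)^m$ columns with entries drawn from $\{1,2,\dots,r-1\}$. Since no entry of $A$ equals $0$, no submatrix of $A$ can be a row-and-column permutation of $F$, so $F\nprec A$ and hence $\forb(m,r,F)\geq(r-1)^m$. If instead $F$ is the all-ones matrix, the symmetric construction using columns over $\{0,2,3,\dots,r-1\}$ works equally well (one may also invoke $\forb(m,r,F)=\forb(m,r,F^c)$ to reduce to the previous case). Every nonempty $(0,1)$-matrix falls into one of these two cases, so $\forb(m,r,F)\geq(r-1)^m$ in all cases, whence $\log\forb(m,r,F)\geq m\log(r-1)$ and the $\liminf$ of the ratio is at least $1$.

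Combining the two estimates gives $\log\forb(m,r,F)/(m\log(r-1))\to1$, which is the assertion. Because the argument is ultimately a squeeze, I do not anticipate a serious obstacle; the only points needing a little care are confirming that the leading behaviour of \cref{thm:complete-config} is genuinely $(r-1)^{m+o(m)}$ once logarithms swallow the binomial coefficients, and verifying that the lower-bound construction covers the degenerate cases in which $F$ is constant.
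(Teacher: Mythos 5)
Your overall strategy is the same as the paper's: squeeze $\log\forb(m,r,F)$ between $m\log(r-1)$ (from an explicit construction) and $m\log(r-1)+O(\log m)$ (from an upper bound via complete configurations). The lower-bound half is fine and matches the paper exactly. But the upper-bound half contains a false step: for a general $(0,1)$-\emph{configuration} $F$, it is not true that $F\prec K_k$. The corollary is stated for every $(0,1)$-configuration, and configurations in this paper are allowed to have repeated columns (indeed, most of the paper is about $p\cdot K_k$, $p\cdot I_2$, $F(a,b,c,d)$, etc.). Since $K_k$ is simple and $F$ has as many rows as $K_k$, any copy of $F$ inside $K_k$ would have to consist of distinct columns of $K_k$; so if $F$ has a repeated column, $F\not\prec K_k$ and your containment fails. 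As written, your argument only proves the corollary for simple $F$.

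The repair is immediate and is what the paper does: choose $p$ to be the maximum column multiplicity of $F$, so that $F\prec p\cdot K_k$, and invoke the second part of \cref{thm:complete-config}, whose upper-bound argument gives $\forb(m,r,p\cdot K_k)\leq\sum_{i=0}^{k-1}\binom{m}{i}(r-1)^{m-i}+(p-1)\binom{m}{k}$. The extra term $(p-1)\binom{m}{k}$ is polynomial in $m$, so the bound is still at most $Cm^{k-1}(r-1)^m$ for a constant $C$ depending on $F$, and your logarithmic estimate goes through unchanged. With that one correction your proof coincides with the paper's.
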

\begin{proof}
    Since $F \prec p\cdot K_k$ for some $p$ and $k$, \cref{thm:complete-config} guarantees a constant $C > 0$ so that $\forb(m,r,F) \leq Cm^{k-1}(r-1)^{m}$ for every $m$ and $r$. We may assume by complementation that $F$ contains at least one 0, in which case the configuration that contains every column with no 0's avoids $F$; this implies that $\forb(m,r,F) \geq (r-1)^m$ for every $m, r \in \N$. If $r \geq 3$ is fixed, then the logarithmic growth rates of the lower and upper bounds are asymptotically equal as $m \to \infty$.
\end{proof}

The trivial bound $\forb(m,r,F) \leq r^m$ combined with the lower bound $\forb(m,r,F) \geq (r-1)^m$ shows that $\forb(m,r,F) = \Theta(r^m)$ if $m$ is fixed and $\forb(m,r,F)$ is regarded as a function of $r$.

Going back to exact results, let $K_k^s$ denote the $k\times \binom{k}{s}$ configuration of zeros and ones in which every column contains $s$ ones, called the \textit{complete uniform configuration of weight $s$}. F\"uredi and Quinn proved in \cite{furedi1984traces} that $\forb(m,K_k^s) = \sum_{i=0}^{k-1} \binom{m}{i}$. The configuration where $s$ ones never appear above $k-s$ zeros provides the lower bound; since $K_k^s \prec K_k$, Sauer's theorem provides the upper bound. The construction easily extends, yielding the following result.

\begin{proposition}\label{thm:complete-uniform}
If $s \leq k$, then $\forb(m,r,K_k^s) = \sum_{i=0}^{k-1}\binom{m}{i}(r-1)^{m-i}$. If $(r-2)^{m-k} \geq p-1$, then $\forb(m,r,p\cdot K_k^s) = \forb(m,r,K_k^s) + (p-1)\binom{m}{k}$.
\end{proposition}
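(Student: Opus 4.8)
The plan is to mirror the proof of \cref{thm:complete-config}, using \cref{thm:01-equal} for the $p=1$ case and an explicit append-and-delete argument for general $p$.

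For the first statement, I would fix a linear order on the rows and, for each $j$, let $A_j$ be the F\"uredi--Quinn construction on $j$ rows: the set of all $(0,1)$-columns that contain no \emph{staircase} consisting of $s$ ones appearing above $k-s$ zeros (that is, no rows $i_1 < \cdots < i_k$ with ones in $i_1,\dots,i_s$ and zeros in $i_{s+1},\dots,i_k$). I would then check the three hypotheses of \cref{thm:01-equal}. First, $A_j$ avoids $K_k^s$: a copy of $K_k^s$ in rows $Y = \{y_1 < \cdots < y_k\}$ would in particular contain the weight-$s$ column with ones in $y_1,\dots,y_s$ and zeros in $y_{s+1},\dots,y_k$, which is exactly a forbidden staircase. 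Second, $\lvert A_j\rvert = \sum_{i=0}^{k-1}\binom{j}{i} = \forb(j,K_k^s)$ by F\"uredi--Quinn \cite{furedi1984traces}, with Sauer's theorem giving the matching upper bound since $K_k^s \prec K_k$. Third, and this is the key point, staircase-avoidance is hereditary under restriction of the rows: if $c \in A_j$ and $S$ is a $k$-set, then any staircase in $c\vert_S$ (with respect to the induced order) is already a staircase in $c$, so $c\vert_S \in A_k$ and hence $\overline{A_j\vert_S} \subseteq A_k$. With the hypotheses verified, \cref{thm:01-equal} gives the forbidden number as $\sum_{n}\binom{m}{n}(r-2)^{m-n}\sum_{i=0}^{k-1}\binom{n}{i}$, and the same binomial manipulation as in \cref{thm:complete-config} collapses this to $\sum_{i=0}^{k-1}\binom{m}{i}(r-1)^{m-i}$.

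For the statement about $p \cdot K_k^s$, I would argue both bounds exactly as for $p\cdot K_k$. For the lower bound, start from the optimal $K_k^s$-avoiding construction supplied by \cref{thm:lower-01-bound} (copies of the $A_j$ on $j$-sets, filled with $\{2,\dots,r-1\}$ off the chosen rows). For each $k$-set $X$, ordered $x_1 < \cdots < x_k$, let $w_X$ be the weight-$s$ column with ones in $x_1,\dots,x_s$ and zeros below; since $w_X$ is itself a staircase it never occurs as an all-$(0,1)$ restriction of a base column on $X$ (heredity again), so I may append $p-1$ new columns displaying $w_X$ on $X$ and ranging over distinct tuples from $\{2,\dots,r-1\}^{m-k}$ elsewhere. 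The hypothesis $(r-2)^{m-k} \geq p-1$ supplies enough such tuples, a short check shows the appended columns are pairwise distinct and distinct from the base columns (they are $(0,1)$ exactly on their own $k$-set), and no $w_X$ attains multiplicity $p$ on any $X$, so the result avoids $p \cdot K_k^s$ while gaining exactly $(p-1)\binom{m}{k}$ columns. For the upper bound, let $A$ avoid $p\cdot K_k^s$; for each $k$-set $X$ some weight-$s$ column occurs at most $p-1$ times in $A\vert_X$, and deleting all columns realizing these rare patterns (at most $(p-1)\binom{m}{k}$ in total) yields $A'$ with $K_k^s \nprec A'$, because $K_k^s$ is symmetric and each $k$-set is now missing a weight-$s$ column. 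Then $\lvert A\rvert \leq \lvert A'\rvert + (p-1)\binom{m}{k} \leq \forb(m,r,K_k^s) + (p-1)\binom{m}{k}$.

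I expect the only genuinely delicate point to be the heredity hypothesis of \cref{thm:01-equal} in the first statement: it is what forces the order-sensitive staircase construction rather than a naive ``few ones'' family (which fails to avoid $K_k^s$ when $s < k$), and it must be checked with respect to a single global row order so that restriction to every $k$-set lands inside $A_k$. Everything else is a direct transcription of the $p \cdot K_k$ argument, with the all-zero column replaced by the distinguished weight-$s$ staircase column $w_X$ and the bookkeeping powers of $r-1$ replaced by powers of $r-2$.
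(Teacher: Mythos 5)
Your proposal is correct and follows essentially the same route as the paper: the staircase-free (Füredi--Quinn) configurations $A_j$ fed into \cref{thm:01-equal} for the first statement, and for the second the same append-$(p-1)$-columns-per-$k$-set lower bound together with the deletion/symmetry upper bound carried over verbatim from \cref{thm:complete-config}. You spell out the heredity check and the verification that the appended columns create no $p$-fold staircase in more detail than the paper does, but the argument is the same.
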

\begin{proof}
Let $A_n$ be the $n$-rowed configuration that contains every column in which $s$ ones do not appear above $k-s$ zeros. The sequence $(A_n)$ satisfies the conditions of \cref{thm:01-equal}, and an identical calculation to the one in the proof of \cref{thm:complete-config} proves the first statement.

The proof of the upper bound for the second statement is identical to the one in \cref{thm:complete-config}. For the lower bound, let $A$ be the configuration that contains every column where $s$ ones never appear above $k-s$ zeros; this configuration avoids $K_k^s$. If $(r-2)^{m-k} \geq p-1$, then for each $X \in \binom{[m]}{k}$ we can append $p-1$ columns to $A$ that have $s$ ones above $k-s$ zeros in the rows of $X$ and non-binary digits elsewhere. The resulting configuration avoids $p\cdot K_k^s$ and has $\forb(m,r,p\cdot K_k^s) = \forb(m,r,K_k^s) + (p-1)\binom{m}{k}$ columns.
\end{proof}

A matrix is called \textit{$p$-simple} if each column has multiplicity at most $p$.

\begin{corollary}\label{thm:p-simple-with-p-block}
Assume that $F$ is a $k$-rowed $p$-simple matrix such that $p\cdot K_k^s \prec F$ for some $0 \leq s \leq k$. If $(r-2)^{m-k} \geq p-1$, then
\[ \forb(m,r,F) = \sum_{n=0}^{k-1} \binom{m}{n}(r-1)^{m-n} + (p-1)\binom{m}{k}. \]
If $F$ is simple and $K_k^s \prec F$, then $\forb(m,r,F) = \forb(m,r,K_k^s) = \forb(m,r,K_k)$ for all $m \in \N$ and $r \geq 2$.
\end{corollary}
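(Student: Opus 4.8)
The plan is to prove both statements by a sandwiching argument, bracketing $F$ between two complete-type configurations whose forbidden numbers are already known and, conveniently, coincide. First I would establish the two containments $p\cdot K_k^s \prec F \prec p\cdot K_k$. The left one is exactly the hypothesis. For the right one, note that $F$ has $k$ rows and, being $p$-simple, each of its columns is one of the $2^k$ possible $(0,1)$-columns occurring with multiplicity at most $p$; since $p\cdot K_k$ consists of every such column repeated exactly $p$ times, $F$ is obtained from $p\cdot K_k$ by deleting columns, so $F\prec p\cdot K_k$. Monotonicity of $\forb$ under $\prec$ then gives
\[
\forb(m,r,p\cdot K_k^s)\ \le\ \forb(m,r,F)\ \le\ \forb(m,r,p\cdot K_k).
\]

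Next I would evaluate the two outer quantities (assuming, as the expression $(r-2)^{m-k}$ implicitly demands, that $m\ge k$ and $r\ge 3$). The hypothesis $(r-2)^{m-k}\ge p-1$ is precisely the condition required by \cref{thm:complete-uniform}, which gives $\forb(m,r,p\cdot K_k^s)=\sum_{n=0}^{k-1}\binom{m}{n}(r-1)^{m-n}+(p-1)\binom{m}{k}$. For the upper end I would invoke \cref{thm:complete-config}, whose side condition is $(r-1)^{m-k}\ge p-1$; this is automatically met, since $r-1\ge r-2\ge 0$ and $m-k\ge 0$ force $(r-1)^{m-k}\ge (r-2)^{m-k}\ge p-1$, and \cref{thm:complete-config} then yields the identical value $\sum_{n=0}^{k-1}\binom{m}{n}(r-1)^{m-n}+(p-1)\binom{m}{k}$ for $\forb(m,r,p\cdot K_k)$. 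Because the lower and upper bounds agree, the sandwich collapses and the first formula follows.

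For the second statement I would specialize to $p=1$: a simple matrix is $1$-simple, and $K_k^s\prec F$ is the $p=1$ instance of the containment, so the same reasoning produces $K_k^s\prec F\prec K_k$ and hence $\forb(m,r,K_k^s)\le\forb(m,r,F)\le\forb(m,r,K_k)$. Here both boundary values equal $\sum_{n=0}^{k-1}\binom{m}{n}(r-1)^{m-n}$ by \cref{thm:complete-config,thm:complete-uniform} (for $r=2$ one uses Sauer's theorem and the F\"uredi--Quinn formula directly, which give the same value), so the squeeze forces equality throughout. Crucially, with $p=1$ the side conditions read $(r-2)^{m-k}\ge 0$ and $(r-1)^{m-k}\ge 0$, which hold for every $r\ge 2$ and every $m$ (in the degenerate range $m<k$ all three numbers collapse to $r^m$), so no restriction on $m$ or $r$ is needed.

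The argument is essentially bookkeeping, and there is no genuine obstacle beyond verifying that both propositions apply. The one point requiring care is the mismatch between the two hypotheses---$(r-2)^{m-k}\ge p-1$ for $K_k^s$ versus $(r-1)^{m-k}\ge p-1$ for $K_k$---and checking that our assumption (the former) implies the latter; I would make this comparison explicit. I would also flag that the first part tacitly assumes $m\ge k$ so that $(r-2)^{m-k}$ is meaningful, whereas the $p=1$ statement holds for all $m\in\N$.
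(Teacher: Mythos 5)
Your proof is correct and follows essentially the same route as the paper, which simply notes the sandwich $p\cdot K_k^s \prec F \prec p\cdot K_k$ and cites \cref{thm:complete-config,thm:complete-uniform}; you merely spell out the details (why $p$-simplicity gives $F \prec p\cdot K_k$, why $(r-2)^{m-k}\ge p-1$ implies $(r-1)^{m-k}\ge p-1$, and the $r=2$ case via Sauer and F\"uredi--Quinn). No substantive difference.
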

\begin{proof}
Since $p\cdot K_k^s \prec F \prec p\cdot K_k$, the statement follows from \cref{thm:complete-config,thm:complete-uniform}.
\end{proof}

The result for non-simple matrices in \cref{thm:complete-uniform} is only applicable when $r > 3$. The argument can be modified to show that $\forb(m,3,p\cdot K_k^s)$ is at most an additive constant away from $\sum_{i=0}^{k-1} \binom{m}{i}2^{m-i} + (p-1)\binom{m}{k}$.

\begin{proposition}\label{thm:non-simple-complete-uniform}
Suppose $p > 1$ and $a = \lceil \log_2(p-1)\rceil$. Then
\[ \forb(m,3,p\cdot K_k^s) \geq \sum_{i=0}^{k-1} \binom{m}{i}2^{m-i} + (p-1) \left(\binom{m}{k} - \binom{k+a-1}{k}\right). \]
\end{proposition}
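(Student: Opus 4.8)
The plan is to start from the simple construction that realizes $\forb(m,3,K_k^s)$ and append extra columns that raise the multiplicity of one column of $K_k^s$ to $p-1$ on as many $k$-sets of rows as possible. Fix the row order $1<2<\dots<m$ and let $A_0$ be the configuration consisting of every column in $\{0,1,2\}^m$ whose binary entries (the positions valued $0$ or $1$) contain no $s$ ones above $k-s$ zeros. This is exactly the $r=3$ construction in the proof of \cref{thm:complete-uniform}, so $A_0$ avoids $K_k^s$ and $\lvert A_0\rvert=\sum_{i=0}^{k-1}\binom{m}{i}2^{m-i}$. For a $k$-set $Y$ write $b_Y$ for the unique column of $K_k^s$ whose $s$ ones sit above its $k-s$ zeros in the induced order on $Y$. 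The defining feature of $A_0$ is that $b_Y$ occurs zero times among the columns of $A_0$ restricted to $Y$, for every $Y$.

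Since $A_0$ misses $b_Y$ entirely on every $Y$, it suffices to append columns so that, for each $k$-set $Y$, the pattern $b_Y$ is produced at most $p-1$ times; the resulting configuration then avoids $p\cdot K_k^s$, because on every $Y$ the column $b_Y$ of $K_k^s$ appears fewer than $p$ times. For $r>3$ this is easy—\cref{thm:complete-uniform} appends, for each $k$-set $X$, the $p-1$ columns carrying $b_X$ on $X$ and non-binary digits elsewhere, each binary on $X$ alone—but for $r=3$ the single non-binary symbol yields only one new column per $X$. The fix is to encode the required $p-1$ copies in binary using $a=\lceil\log_2(p-1)\rceil$ auxiliary \emph{marker} rows: for a $k$-set $X$ we place $b_X$ on $X$, assign $0/1$ values to $a$ marker rows selected from the non-$X$ rows lying below the top row $x_k$ of $X$, and fill every remaining position with $2$. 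There are $2^a\ge p-1$ binary patterns on the markers, so this produces $p-1$ distinct columns whenever at least $a$ such marker rows exist. As the number of non-$X$ rows below $x_k$ is $(x_k-1)-(k-1)=x_k-k$, a $k$-set $X$ is handled exactly when $x_k\ge k+a$; the unhandled $k$-sets are those contained in the first $k+a-1$ rows, of which there are precisely $\binom{k+a-1}{k}$.

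Granting the construction, each appended column carries the non-good pattern $b_X$ and so lies outside $A_0$, and columns from different homes or different marker fillings are distinct, giving the count $\lvert A_0\rvert+(p-1)\bigl(\binom{m}{k}-\binom{k+a-1}{k}\bigr)$ claimed in the statement. The genuine difficulty—and the step I expect to be the main obstacle—is the avoidance check: unlike the $r>3$ case, a binary marker can serve as one of the distinguished ones or zeros of a spurious copy of $b_Y$ for some $Y\ne X$, so an appended column may restrict to a forbidden pattern on several $k$-sets simultaneously. One must therefore bound, summing over all homes $X$, the number of appended columns restricting to $b_Y$ on any fixed $Y$ by $p-1$. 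I would attack this by controlling where the marker $0$'s and $1$'s can complete a realization of $1^s0^{k-s}$ (so that each marker only ever recreates $b_{X}$ together with the rows of $X$ it is paired with), and, where a purely local guarantee fails, fall back on a global multiplicity argument that exhibits, for each offending $Y$, some column of $K_k^s$ other than $b_Y$ whose count on $Y$ stays below $p$, using the abundance of good patterns in $A_0$. Carrying out this multiplicity bound carefully is exactly what forces the exclusion term $\binom{k+a-1}{k}$ and completes the lower bound.
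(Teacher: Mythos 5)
Your skeleton is the same as the paper's: start from the configuration $A_0$ of all columns with no $s$ ones above $k-s$ zeros, and for each eligible $k$-set $X$ append $p-1$ columns realizing the missing pattern $1^s0^{k-s}$ on $X$, with the multiplicity encoded by binary choices at free positions. But the entire content of the proposition is the step you explicitly defer: arranging that no $k$-set $Y$ accumulates $p$ copies of $b_Y$. Your marker scheme does not survive this check. If the markers are arbitrary non-$X$ rows of index less than $i_k$ carrying arbitrary $0/1$ values, spurious realizations are unavoidable: for $k=2$, $s=1$, a home $X=\{1,m\}$ whose marker at row $2$ takes the value $0$ restricts to $\sbb 1 \\ 0 \seb$ on $Y=\{1,2\}$, and since the $m-2$ homes $\{1,j\}$ each contribute up to $p-1$ columns, $Y$ can collect far more than $p-1$ copies of $b_Y$. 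Your proposed fallback --- exhibiting some other column of $K_k^s$ with multiplicity below $p$ on an offending $Y$ --- cannot rescue this, because $A_0\vert_Y$ already contains every weight-$s$ column other than $b_Y$ with multiplicity on the order of $2^{m-k}$, so controlling the count of $b_Y$ is the only available safeguard.

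The paper closes exactly this gap by constraining both the location and the values of the free positions: non-$X$ rows of index at most $i_s$ may take only values in $\{0,2\}$, non-$X$ rows of index at least $i_{s+1}$ only values in $\{1,2\}$, and every row strictly between $i_s$ and $i_{s+1}$ is forced to $2$. Then any extra $0$ lies above all $s$ forced $1$'s and any extra $1$ lies below all $k-s$ forced $0$'s, so the pattern $1^s0^{k-s}$ is realized on the home set $X$ and nowhere else; distinctness and the column count then follow as you describe. This also replaces your eligibility condition $i_k\ge k+a$ by $i_s+(m-i_{s+1})-k\ge\log_2(p-1)$; the number of ineligible $k$-sets happens to equal $\binom{k+a-1}{k}$ under both counts, but only the latter condition comes with a construction that actually avoids $p\cdot K_k^s$.
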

\begin{proof}
Let $A$ be the configuration with all columns that do not contain $s$ ones above $k-s$ zeros. For every $k$-set $X$ with elements $i_1 < i_2 < \dots < i_k$ and $i_s + (m - i_{s+1}) - k \geq \log_2(p-1)$, we may append $p-1$ columns to $A$ with entries $\mathfrak{c}_i$ given by
\[ \begin{cases}
    \mathfrak{c}_i = 1         &\text{if } i \in X \text{ and } i \leq i_s\\
    \mathfrak{c}_i \in \{0,2\} &\text{if } i \notin X \text{ and } i \leq i_s\\
    \mathfrak{c}_i = 2         &\text{if } i_s < i < i_{s+1}\\
    \mathfrak{c}_i = 0         &\text{if } i \in X \text{ and } i \geq i_{s+1}\\
    \mathfrak{c}_i \in \{1,2\} &\text{if } i \notin X \text{ and } i \geq i_{s+1}.
\end{cases} \]
For each such column $\mathfrak{c}$, there is exactly one $k$-set $S$ (namely $S = X$) so that $\mathfrak{c}\vert_S$ is $s$ ones above $k-s$ zeros. Therefore, the resulting configuration $A'$ avoids $p\cdot K_k^s$.

To determine the number of columns added to $A$, we count the number of choices of $X$ with $i_s + (m - i_{s+1}) - k < \log_2(p-1)$. The number of choices with $i_s + (m - i_{s+1}) - k = b$ is $\binom{k-1+b}{k-1}$, so the number of choices of $X$ not covered in our strategy is
\[ \sum_{b=0}^{a-1} \binom{k-1+b}{k-1} = \binom{k+a-1}{k}. \]
In total, then $A'$ contains $(p-1)\big(\binom{m}{k} - \binom{k+a-1}{k}\big)$ more columns than $A$.
\end{proof}

\begin{corollary}\label{thm:complete-uniform-p=2}
$\forb(m,3,2\cdot K_k^s) = \sum_{i=0}^{k-1} \binom{m}{i}(r-1)^{m-i} + \binom{m}{k}$.
\end{corollary}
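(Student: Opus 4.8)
The plan is to sandwich $\forb(m,3,2\cdot K_k^s)$ between a lower bound read off from \cref{thm:non-simple-complete-uniform} and a matching upper bound coming from the deletion argument already used in \cref{thm:complete-config,thm:complete-uniform}. The key observation is that the two bounds coincide exactly at $p=2$, so no new construction or counting is needed.

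For the lower bound I would specialize \cref{thm:non-simple-complete-uniform} to $p=2$. Then $a=\lceil\log_2(p-1)\rceil=\lceil\log_2 1\rceil=0$, so the excluded family of $k$-sets has size $\binom{k+a-1}{k}=\binom{k-1}{k}=0$. Thus the proposition already delivers $\forb(m,3,2\cdot K_k^s)\geq\sum_{i=0}^{k-1}\binom{m}{i}2^{m-i}+\binom{m}{k}$, which is precisely the claimed value; the only thing to verify is that this edge case empties the correction term, which is immediate.

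For the upper bound I would reuse, essentially verbatim, the deletion argument from the proof of \cref{thm:complete-config} (whose uniform analogue is the one cited in \cref{thm:complete-uniform}). The crucial point is that this argument never invokes the hypothesis $(r-2)^{m-k}\geq p-1$, so it is valid for every $r$ and $p$. Concretely: take $A\in\Avoid(m,3,2\cdot K_k^s)$. For each $k$-set $X$ of rows, $A\vert_X$ cannot contain two copies of every weight-$s$ binary column, so some weight-$s$ column $c_X$ occurs at most once in $A\vert_X$; delete from $A$ the (at most one) column restricting to $c_X$ on $X$. Carrying this out over all $k$-sets removes at most $\binom{m}{k}$ columns and leaves a simple configuration $A'$ with $K_k^s\nprec A'\vert_X$ for every $X$; since $K_k^s$ is invariant under row permutations, this forces $K_k^s\nprec A'$, and hence $\lvert A'\rvert\leq\forb(m,3,K_k^s)=\sum_{i=0}^{k-1}\binom{m}{i}2^{m-i}$ by \cref{thm:complete-uniform}. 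Therefore $\lvert A\rvert\leq\sum_{i=0}^{k-1}\binom{m}{i}2^{m-i}+\binom{m}{k}$, and combining with the lower bound gives equality.

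I do not expect a genuine obstacle here, since the result is really the boundary case of \cref{thm:complete-uniform}: for $r=3$ and $p=2$ the hypothesis $(r-2)^{m-k}=1\geq 1=p-1$ is satisfied (with equality). The one subtle point worth flagging is that the upper-bound inequality $\forb(m,r,p\cdot K_k^s)\leq\forb(m,r,K_k^s)+(p-1)\binom{m}{k}$ holds unconditionally in $r$, whereas the matching lower-bound construction of \cref{thm:complete-uniform} does not; the role of \cref{thm:non-simple-complete-uniform} is simply to certify that when $p=2$ the lower-bound construction loses nothing, closing the gap.
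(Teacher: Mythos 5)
Your proof is correct and takes essentially the same route as the paper: the lower bound is exactly the paper's (specialize \cref{thm:non-simple-complete-uniform} to $p=2$, where $a=0$ makes the correction term $\binom{k-1}{k}$ vanish), and your upper bound re-runs the deletion argument of \cref{thm:complete-config} directly on $K_k^s$, whereas the paper just cites $2\cdot K_k^s \prec 2\cdot K_k$ together with \cref{thm:complete-config}. Both versions of the upper bound yield the identical value $\sum_{i=0}^{k-1}\binom{m}{i}2^{m-i}+\binom{m}{k}$, so the difference is cosmetic.
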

\begin{proof}
Applying \cref{thm:non-simple-complete-uniform} with $p=2$ gives the lower bound, and the upper bound follows from \cref{thm:complete-config} together with $2\cdot K_k^s \prec 2\cdot K_k$.
\end{proof}

\section{Identity matrices}\label{sec:identity-matrices}

Noting that $I_k = K_k^1$ yields the following corollary of \cref{thm:complete-uniform}.

\begin{corollary}\label{thm:identity-r>3}
If $r > 3$, then $\forb(m,r,p\cdot I_k) = \sum_{i=0}^{k-1}\binom{m}{i}(r-1)^{m-i} + (p-1)\binom{m}{k}$ for all $m$ such that $(r-2)^{m-k} \geq p-1$.
\end{corollary}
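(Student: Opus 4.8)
The plan is to recognize that the $k\times k$ identity matrix is nothing other than the complete uniform configuration of weight $1$, and then to read off the result directly from \cref{thm:complete-uniform}. Concretely, the columns of $I_k$ are exactly the $k$ standard basis vectors $e_1,\dots,e_k$, each of which has a single $1$ among its $k$ entries. The configuration $K_k^1$ is by definition the $k\times\binom{k}{1}$ matrix whose columns are all $(0,1)$-columns of weight $1$; since $\binom{k}{1}=k$, these are precisely $e_1,\dots,e_k$. Hence $I_k = K_k^1$ as configurations (up to the row and column permutations that $\prec$ allows), and likewise $p\cdot I_k = p\cdot K_k^1$.

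With this identification in hand, the statement follows by specializing \cref{thm:complete-uniform} to $s=1$. First I would note that $s=1\leq k$, so the first part of \cref{thm:complete-uniform} applies and gives $\forb(m,r,K_k^1)=\sum_{i=0}^{k-1}\binom{m}{i}(r-1)^{m-i}$. Then, under the hypothesis $(r-2)^{m-k}\geq p-1$, the second part yields
\[
    \forb(m,r,p\cdot I_k)=\forb(m,r,p\cdot K_k^1)=\forb(m,r,K_k^1)+(p-1)\binom{m}{k},
\]
and substituting the value of $\forb(m,r,K_k^1)$ produces exactly the claimed formula.

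The only genuine point worth isolating is the role of the hypothesis $r>3$. The lower-bound construction behind \cref{thm:complete-uniform} appends, for each $k$-set of rows, $p-1$ new columns that carry the weight-$1$ pattern on that $k$-set and non-binary symbols from $\{2,\dots,r-1\}$ elsewhere; there are $(r-2)^{m-k}$ such columns available, so the construction needs $(r-2)^{m-k}\geq p-1$. For $r=3$ this forces $(r-2)^{m-k}=1\geq p-1$, i.e.\ $p\leq2$, so the exact additive term $(p-1)\binom{m}{k}$ can be realized only for $p\leq2$; this is why the clean formula is stated for $r>3$, where the alphabet $\{2,\dots,r-1\}$ is large enough to supply the required columns for a genuine range of $p$. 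I do not anticipate any real obstacle beyond this bookkeeping, since all the combinatorial work—both the matching upper bound via $p\cdot K_k^1\prec p\cdot K_k$ and the lower-bound construction—has already been carried out in \cref{thm:complete-uniform,thm:complete-config}.
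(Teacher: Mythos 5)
Your proposal is correct and is exactly the paper's argument: the paper proves this corollary in one line by noting $I_k = K_k^1$ and specializing \cref{thm:complete-uniform} to $s=1$. Your additional remark about why $r>3$ is needed (for $r=3$ the hypothesis $(r-2)^{m-k}\geq p-1$ forces $p\leq 2$) is accurate and matches the paper's later discussion of the $r=3$ case.
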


The main result of this section is a stability theorem for $I_2$. It would be interesting to see similar stability theorems for other complete uniform configurations.

With each configuration $A \in \Avoid(m,r,I_2)$ we can associate a tournament on $m$ vertices. Direct an edge from $i$ to $j$ if there is no column in which 0 appears in row $i$ and 1 appears in row $j$. If both $ij$ and $ji$ are possible edges, choose just one. Since $A$ avoids $I_2$, there must be an edge between each pair of vertices, so this construction gives a tournament $T_A$ on $m$ vertices.

\begin{proposition}\label{thm:stability-I2}
Let $r \geq 2$ and $A \in \Avoid(m,r,I_2)$ such that $T_A$ is not transitive. Then $\lvert A \rvert \leq m(r-1)^{m-1} + (r-1)^m - 2(r-1)^{m-3}$.
\end{proposition}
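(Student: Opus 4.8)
The plan is to bound $\lvert A\rvert$ by the number of columns that are \emph{consistent} with the tournament $T_A$, and then to compute that number exactly in terms of the strongly connected components of $T_A$. By the very definition of $T_A$, an edge $i\to j$ certifies that no column of $A$ has a $0$ in row $i$ and a $1$ in row $j$; hence every column $c$ of $A$ satisfies, for each edge $i\to j$, that not both $c_i=0$ and $c_j=1$. Writing $N(T)$ for the number of vectors in $\{0,\dots,r-1\}^m$ meeting all these constraints, simplicity of $A$ gives $\lvert A\rvert\le N(T_A)$, so it suffices to prove $N(T_A)\le m(r-1)^{m-1}+(r-1)^m-2(r-1)^{m-3}$.

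Next I would reparametrize a consistent column by its zero-set $Z$, its one-set $O$, and the assignment of the ``large'' values $\{2,\dots,r-1\}$ to the remaining $m-\lvert O\rvert-\lvert Z\rvert$ rows. The constraints say precisely that no edge runs from $Z$ to $O$; equivalently, inside $W:=O\cup Z$ every edge between $O$ and $Z$ points from $O$ to $Z$, i.e.\ $O$ dominates its complement in the subtournament $T_A[W]$. The key step is a counting lemma: for a fixed $W$, the number of such ordered splits $(O,Z)$ equals $t_W+1$, where $t_W$ is the number of strongly connected components of $T_A[W]$. This holds because the strongly connected components of a tournament are linearly ordered with all edges going forward, so a set $O$ dominating $W\setminus O$ must be a union of an initial segment of components; it cannot split a component, since a strongly connected piece has edges in both directions. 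Summing over $W$ and the $(r-2)^{m-\lvert W\rvert}$ fillings of the remaining rows then yields
\[
N(T_A)=\sum_{W\subseteq[m]}(t_W+1)(r-2)^{m-\lvert W\rvert}.
\]

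I would then compare against the transitive benchmark. If $T_A$ were transitive, every $t_W$ would equal $\lvert W\rvert$, and a binomial computation like the one in the proof of \cref{thm:complete-config} collapses the sum to $(r-1)^m+m(r-1)^{m-1}=\forb(m,r,I_2)$. Hence the deficit is $\forb(m,r,I_2)-N(T_A)=\sum_{W}(\lvert W\rvert-t_W)(r-2)^{m-\lvert W\rvert}$, a sum of nonnegative terms. Since $T_A$ is not transitive it contains a directed triangle on some vertices $\{a,b,c\}$, and these three rows lie in a common strongly connected component of $T_A[W]$ for every $W\supseteq\{a,b,c\}$, forcing $\lvert W\rvert-t_W\ge 2$ there. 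Restricting the deficit sum to those $W$ gives $\sum_{W\supseteq\{a,b,c\}}2(r-2)^{m-\lvert W\rvert}=2(r-1)^{m-3}$, exactly the claimed loss.

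The main obstacle I anticipate is the counting lemma identifying the dominating splits of $W$ with $t_W+1$; once the strongly connected component structure is in hand, the remaining summations are routine. A secondary point to handle with care is the boundary behaviour, namely the convention $(r-2)^0=1$ and the degenerate case $r=2$ where only $W=[m]$ contributes, but neither affects the argument.
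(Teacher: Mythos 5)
Your argument is correct, and its key step differs from the one in the paper. Both proofs share the same outer skeleton: decompose the columns according to their $\{0,1\}$-support $W$, bound the number of distinct $(0,1)$-patterns on $W$ by $\lvert W\rvert+1$ in general and by $\lvert W\rvert-1$ when $W$ contains the directed triangle, weight by $(r-2)^{m-\lvert W\rvert}$, and sum. Where you diverge is in how the per-$W$ bound is obtained. The paper first proves the $r=2$ stability statement by hand (on a $3$-cycle $ijk$ the only possible restricted columns are $\mathbf{0}_3$ and $\mathbf{1}_3$, so two rows can be deleted without collapsing columns, giving $\lvert A\rvert \le \forb(m-2,I_2)=m-1$), and then feeds this into the generic transfer argument of \cref{thm:upper-01-bound}, using $\forb(k,I_2)=k+1$ for the sets not containing the triangle. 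You instead prove an exact structural formula: the number of columns consistent with the tournament is $\sum_W (t_W+1)(r-2)^{m-\lvert W\rvert}$, where $t_W$ is the number of strongly connected components of $T_A[W]$, via the standard fact that the components of a tournament are linearly ordered and a dominating split must be an initial segment of that order. Your route buys more than the proposition asks for: it gives the exact count of tournament-consistent columns (and in fact that full set of consistent columns itself avoids $I_2$, so the count is attained), and the deficit $\sum_W(\lvert W\rvert-t_W)(r-2)^{m-\lvert W\rvert}$ quantifies the loss for every non-transitive tournament, not just the worst case of a single triangle. The paper's route is shorter and reuses machinery already in place. Your handling of the boundary cases ($W=\emptyset$, and $r=2$ where only $W=[m]$ survives and the bound degenerates to $m-1$) is consistent with the paper's conventions, so no gap remains.
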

\begin{proof}
We first prove the case $r=2$: If $A \in \Avoid(k,2,I_2)$ such that $T_A$ is not transitive, then $\lvert A \rvert \leq m-1$. Since $T_A$ is not transitive, it contains a 3-cycle $ijk$. The only possible columns in $A\vert_{\{i,j,k\}}$ are $\sbb 0 \\ 0 \\ 0 \seb$ and $\sbb 1 \\ 1 \\ 1\seb$. If we delete rows $i$ and $j$, then the resulting configuration $A'$ is simple and avoids $I_2$, so $\lvert A \rvert = \lvert A' \rvert \leq \forb(m-2,I_2) = m-1$.

We now proceed with the general case. Suppose that $A \in \Avoid(m,r,I_2)$ with $T_A$ not transitive. As before, there is a 3-cycle $ijk$ in $T_A$. Applying the argument used in the proof of \cref{thm:upper-01-bound} and splitting the sum over sets that do or do not contain $\{i,j,k\}$ gives the bound
\begin{align*}
    \forb(m,r,I_2)
    &\leq \sum_{\substack{X \subseteq [m] \\ \mathclap{\{i,j,k\} \not\subseteq X}}} (r-2)^{m-\lvert X\rvert}(\lvert X \rvert+1)\,
        + \mspace{-13mu}\sum_{\substack{X \subseteq [m] \\ \mathclap{\{i,j,k\} \subseteq X}}} (r-2)^{m-\lvert X\rvert}(\lvert X \rvert-1)\\
    &= \sum_{X \subseteq [m]} (r-2)^{m-\lvert X\rvert}(\lvert X \rvert+1)\,
        -\, 2\sum_{\substack{X \subseteq [m] \\ \mathclap{\{i,j,k\} \subseteq X}}}  (r-2)^{m-\lvert X\rvert}.
\end{align*}
The left sum simplifies to
\[
    \sum_{k=0}^m \binom{m}{k}(r-2)^{m-k}(k+1) = m(r-1)^{m-1} + (r-1)^m,
\]
and the right sum is
\[
    \sum_{k=3}^m \binom{m-3}{k-3} (r-2)^{m-k}
    = \sum_{k=0}^{m-3} \binom{m-3}{k} (r-2)^{m-3-k}
    = (r-1)^{m-3}.
\]
Combining the two evaluations completes the proof.
\end{proof}

\begin{theorem}\label{thm:I2-unique}
For each integer $r \geq 2$, there is a unique extremal $r$-configuration with $m$ rows that avoids $I_2$.
\end{theorem}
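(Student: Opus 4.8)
The plan is to exhibit the unique extremal configuration explicitly and then show that every extremal configuration coincides with it up to a row permutation. Fix a linear order on the rows and let $\mathcal C$ be the family of all columns $\mathfrak c \in \{0,\dots,r-1\}^m$ in which no $0$ lies above a $1$, i.e.\ there is no pair $i<j$ with $\mathfrak c_i = 0$ and $\mathfrak c_j = 1$, while entries from $\{2,\dots,r-1\}$ may appear anywhere. First I would check that $\mathcal C$ avoids $I_2$: for every pair of rows $i<j$ the binary pattern $\sbb 0\\1\seb$ never occurs in $\mathcal C\vert_{\{i,j\}}$, so rows $i,j$ cannot support a copy of $I_2$. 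Counting $\mathcal C$ by the number $u$ of binary entries in a column (the remaining $m-u$ entries each have $r-2$ choices, and the $u$ binary entries admit $u+1$ arrangements with all $1$'s above all $0$'s) gives $\sum_{u=0}^m \binom{m}{u}(u+1)(r-2)^{m-u} = (r-1)^m + m(r-1)^{m-1}$, which equals $\forb(m,r,I_2)$ by \cref{thm:complete-uniform} applied to $K_2^1 = I_2$. Hence $\mathcal C$ is extremal.

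For uniqueness, let $A \in \Avoid(m,r,I_2)$ with $\lvert A\rvert = \forb(m,r,I_2)$ and form the associated tournament $T_A$. If $T_A$ were not transitive, then \cref{thm:stability-I2} would give $\lvert A\rvert \le (r-1)^m + m(r-1)^{m-1} - 2(r-1)^{m-3} < \forb(m,r,I_2)$, a contradiction; so $T_A$ is transitive, and after relabelling the rows we may assume its edges are precisely $i\to j$ for all $i<j$. By the definition of $T_A$, each edge $i\to j$ certifies that no column of $A$ has a $0$ in row $i$ and a $1$ in row $j$; ranging over all $i<j$ shows that every column of $A$ has no $0$ above a $1$, that is, $A \subseteq \mathcal C$. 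Since $\lvert A\rvert = \lvert\mathcal C\rvert$, we conclude $A = \mathcal C$. Thus every extremal configuration is a row permutation of $\mathcal C$, which is the desired uniqueness.

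The main subtlety is the step upgrading the containment $A\subseteq\mathcal C$ to equality, which rests on the exact identity $\lvert\mathcal C\rvert = \forb(m,r,I_2)$, so the binomial evaluation must be carried out with care. A second point to address cleanly is that the construction of $T_A$ involves an arbitrary choice of orientation whenever both $\sbb 0\\1\seb$ and $\sbb 1\\0\seb$ are absent for a pair of rows; one must observe that once $A$ is extremal, \cref{thm:stability-I2} forces \emph{every} such choice to produce a transitive tournament (otherwise the stability bound would apply), so the relabelling is legitimate no matter how the choices are resolved. The boundary cases are immediate: for $m<3$ no $3$-cycle exists and $T_A$ is automatically transitive, while for $r=2$ the family $\mathcal C$ degenerates to the chain of $m+1$ initial-segment columns, recovering the classical extremal configuration.
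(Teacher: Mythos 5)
Your proof is correct and follows essentially the same route as the paper's: invoke the stability result (\cref{thm:stability-I2}) to force $T_A$ to be transitive, reorder the rows so that no $0$ appears above a $1$, and then conclude by counting that an extremal $A$ must contain every such column. The extra care you take with the binomial evaluation, the containment-to-equality step, and the arbitrary edge orientations in $T_A$ only makes explicit what the paper leaves implicit.
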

\begin{proof}
By \cref{thm:stability-I2}, if $A$ is extremal, then $T_A$ is transitive. Therefore there is an ordering $i_1,\dots,i_m$ of $[m]$ so that $i_si_t$ is an edge of $T$ if and only if $s < t$. After permuting the rows of $A$ according to this order, no 0 appears above a 1. There are $m(r-1)^{m-1}$ such columns that contain a 0 and $(r-1)^m$ columns with no $0$. Since $A$ is extremal, it contains all these columns. Up to row and column permutation, therefore, $A$ is unique.
\end{proof}

Thus, there is a gap between the unique extremal configuration that avoids $I_2$ and any other configuration that avoids $I_2$ but is not a subconfiguration of the extremal one.

In another direction, \cref{thm:complete-config,thm:complete-uniform} and \cref{thm:complete-uniform-p=2} show that $\forb(m,3,p\cdot I_2) = \forb(m,3,p\cdot K_2)$ when $p=1$ or $p=2$. However, equality does not hold for higher values of $p$. The following exact evaluation of $\forb(m,r,3\cdot I_2)$ shows that $\forb(m,3,p\cdot I_k) \neq \forb(m,3,p\cdot K_k)$ in general. In contrast, \cref{thm:identity-r>3} states that $\forb(m,r,p\cdot I_k) = \forb(m,r,p\cdot K_k)$ for every $p \geq 1$ when $r > 3$.

\begin{proposition}
If $m \geq 4$, then $\forb(m,3,3\cdot I_2)
= \forb(m,3,3\cdot K_2)-1$.
\end{proposition}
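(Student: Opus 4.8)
The plan is to prove matching bounds. Since $3\cdot I_2\prec 3\cdot K_2$, \cref{thm:complete-config} already gives $\forb(m,3,3\cdot I_2)\le\forb(m,3,3\cdot K_2)=2^m+m2^{m-1}+m(m-1)$, so the entire content is to shave off exactly one column on the upper side and exhibit a matching construction on the lower side. Throughout I would fix a linear order $1<2<\dots<m$ on the rows and call a pair $(s,t)$ with $s<t$ a \emph{violation} of a column $c$ if $c_s=0$ and $c_t=1$; writing $V(c)$ for the set of violations, the configuration $B$ of all columns with $V(c)=\varnothing$ is precisely the unique extremal $I_2$-avoider of \cref{thm:I2-unique}, of size $2^m+m2^{m-1}$.

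For the lower bound I would start from $B$ and append, for each pair $(s,t)$, columns whose unique violation is $(s,t)$. A short case check shows that a column with $V(c)=\{(s,t)\}$ must equal $2$ on every row strictly between $s$ and $t$, must lie in $\{1,2\}$ on the $s-1$ rows above $s$, and must lie in $\{0,2\}$ on the $m-t$ rows below $t$; hence there are exactly $2^{(s-1)+(m-t)}$ such columns. This is at least $2$ for every pair except $(1,m)$, for which it equals $1$. Appending two such columns per pair, and a single one for $(1,m)$, keeps every count $n^{01}_{st}\le 2$, so the enlarged configuration still avoids $3\cdot I_2$; it adds $2\binom m2-1=m(m-1)-1$ new columns, giving the claimed lower bound.

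For the upper bound I would use the tournament $T_A$, orienting $i\to j$ when at most two columns of $A$ have $0$ in row $i$ and $1$ in row $j$; avoidance of $3\cdot I_2$ guarantees such an orientation exists on every pair. Redefining $V(c)$ relative to the edges of $T_A$, the columns with $V(c)=\varnothing$ form an $I_2$-avoiding subconfiguration $A_0$ whose tournament is exactly $T_A$, while the remaining columns number at most $\sum_{\text{edges}}n^{01}\le 2\binom m2=m(m-1)$. If $T_A$ is not transitive, then \cref{thm:stability-I2} bounds $\lvert A_0\rvert$ by $2^m+m2^{m-1}-2^{m-2}$, and the total falls below the claimed bound. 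If $T_A$ is transitive, I may assume the order above, so $\lvert A_0\rvert\le\lvert B\rvert=2^m+m2^{m-1}$ and $\lvert A\rvert\le 2^m+m2^{m-1}+m(m-1)$; to gain the final $-1$, I would show that equality forces $A_0=B$, every violating column to have $\lvert V(c)\rvert=1$, and $n^{01}_{st}=2$ for all $s<t$. In particular $n^{01}_{1m}=2$ would require two distinct columns with $V(c)=\{(1,m)\}$, contradicting the uniqueness established in the construction.

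The main obstacle is the transitive case: the crude counting only reaches $\forb(m,3,3\cdot K_2)$, and the entire gain of one column hinges on the observation that the pair $(1,m)$ admits a single column with violation set $\{(1,m)\}$ while the extremal count would demand two of them. The non-transitive case, by contrast, is handled comfortably by the already-proved stability estimate \cref{thm:stability-I2}, which leaves ample room beyond the target.
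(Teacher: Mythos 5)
Your proof is correct and takes essentially the same route as the paper: the same lower-bound construction (the unique column whose only violation is the pair $(1,m)$ supplies the extra $-1$), and the same upper-bound decomposition of $A$ into an $I_2$-avoiding part plus at most two ``minority'' columns per pair, with the final column shaved off by noting that the pair $(1,m)$ cannot contribute two distinct columns whose unique violation is $(1,m)$. The only cosmetic difference is that you dispatch the degenerate case via the stability bound of \cref{thm:stability-I2}, whereas the paper invokes the uniqueness statement of \cref{thm:I2-unique}; both work.
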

\begin{proof}
Let $A$ be the configuration constructed in the proof of \cref{thm:non-simple-complete-uniform} with $\forb(m,3,3\cdot K_2) - 2$ columns that avoids $3\cdot I_2$. Appending the column $\mathfrak{c}$ with $\mathfrak{c}_1 = 1$, $\mathfrak{c}_m = 0$, and $\mathfrak{c}_i = 2$ for every $1 < i < m$ creates a configuration with $\forb(m,3,3\cdot K_2) - 1$ columns that avoids $3\cdot I_2$.

We now show that any 3-configuration that avoids $3\cdot I_2$ has at most $\forb(m,3,3\cdot K_2) - 1$ columns. In each pair of rows, either $\sbb 0 \\1 \seb$ or $\sbb 1 \\ 0\seb$ appears at most twice. Permuting the corresponding columns to the right end of the configuration $A$, we create a decomposition $A = [BC]$ where $\lvert C \rvert \leq 2\binom{m}{2}$ and $B$ avoids $I_2$. If $B$ is not the unique extremal configuration that avoids $I_2$, then
\[
    \lvert A \rvert
    = \lvert B \rvert + \lvert C\rvert
    \leq \forb(m,3,K_2) - 1 + 2\binom{m}{2}
    = \forb(m,3,3\cdot K_2) - 1.
\]
Otherwise, \cref{thm:I2-unique} shows that we may permute the rows of $B$ so it contains every column where no 0 appears above a 1. Since $B$ has at least four rows, $B|_{\{i,j\}}$ contains at least four columns of the form $\left[\begin{smallmatrix} 1 \\ 0\end{smallmatrix}\right]$ for every $i,j \in [m]$ with $i < j$.

We \textit{mark} the pair $i < j$ for each time that 0 appears in row $i$ and 1 appears in row $j$ in the configuration $C$. Since $A$ avoids $3\cdot I_2$ and $B|_{\{i,j\}}$ already contains four columns of the form $\left[\begin{smallmatrix} 1 \\ 0\end{smallmatrix}\right]$, each pair has at most two marks. Each column of $C$ contributes at least one mark. If the pair $(1,m)$ has at most one mark, then there are at most $2\binom{m}{2} - 1$ columns in $C$. If $(1,m)$ has two marks, then there is a column $\mathfrak{c}$ in $C$ with $\mathfrak{c}_1 = 0$, $\mathfrak{c}_m = 1$, and $\mathfrak{c}_s \neq 2$ for some $1 < s < m$. In this case the column $\mathfrak{c}$ contributes at least two marks: one for $(1,m)$, and one for either $(1,s)$ or $(s,m)$. So in this case, too, there are at most $2\binom{m}{2}-1$ columns in $C$. In either case,
\[
    \lvert A \rvert
    = \lvert B\rvert + \lvert C\rvert
    \leq \forb(m,3,3\cdot K_2) - 1,
\]
proving the lower bound.
\end{proof}

The upper bound in this argument shows that $\forb(m,3,p\cdot K_2) < \forb(m,3,p\cdot I_2)$ for all $p \geq 3$. Indeed, by following this mark argument, one can calculate exact forbidden numbers for larger $p$. It's not too hard to show, for example, that $\forb(m,3,4\cdot I_2) = \forb(m,3,4\cdot K_2) - 2$ and $\forb(m,3,5\cdot I_2) = \forb(m,3,5\cdot K_2) - 5$. The computations, however, rapidly become rather case-heavy as $p$ increases. In general, the mark argument can be extended to show that the difference between $\forb(m,3,p\cdot I_2)$ and $\forb(m,3,p\cdot K_2)$ is superlinear in $p$; for example,
\begin{equation}
    \forb(m,3,p\cdot I_2) \leq \forb(m,3,p\cdot K_2) - \frac{1}{4} (p-1)\log_2(p-1)\big(\!\log_2(p-1)-1\big),
\end{equation}
although this is not sharp.

\section{Block matrices}\label{sec:block-matrices}
\begin{proposition}\label{thm:vertical-block}
If $(r-2)^{m-a-b} \geq p-1$ , then
\begin{multline}\label{eqn:vertical-block-matrix}
    \forb\left(m,r,{\sbb \mathbf{0}_{a\times p} \\ \mathbf{1}_{b\times p} \seb} \right) =
    \sum_{\ell=0}^{a-1}\binom{m}{\ell}(r-1)^{m-\ell}
    + \sum_{k=0}^{b-1}\binom{m}{k}(r-1)^{m-k}\\
    -\sum_{\ell=0}^{a-1}\sum_{k=0}^{b-1}\binom{m}{\ell}\binom{m-\ell}{k}(r-2)^{m-\ell-k}
    + (p-1)\binom{m}{a}\binom{m-a}{b}.
\end{multline}
\end{proposition}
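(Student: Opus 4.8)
The plan is to reduce the forbidden-configuration condition to a counting condition on ``patterns'' and then match a greedy construction against a double-counting bound, with the first three terms of \eqref{eqn:vertical-block-matrix} counting the columns that are individually safe and the last term counting the extra columns permitted by the multiplicity $p$. Since every column of $F$ equals the $(a+b)$-vector with $a$ zeros on top and $b$ ones below, and column permutations of $F$ are trivial, a matrix satisfies $A\in\Avoid(m,r,F)$ precisely when, for every ordered pair of disjoint row sets $(R_0,R_1)$ with $\lvert R_0\rvert=a$ and $\lvert R_1\rvert=b$, fewer than $p$ columns of $A$ are simultaneously $0$ on all of $R_0$ and $1$ on all of $R_1$. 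I will call such a pair a \emph{pattern} and say that a column \emph{covers} $(R_0,R_1)$ if it is $0$ throughout $R_0$ and $1$ throughout $R_1$; thus avoidance of $F$ is exactly the statement that every pattern is covered at most $p-1$ times.

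Next I would split the columns of $A$ according to this structure. Call a column \emph{heavy} if it has at least $a$ entries equal to $0$ and at least $b$ entries equal to $1$, and \emph{light} otherwise. A light column covers no pattern, since covering any $(R_0,R_1)$ would force at least $a$ zeros and at least $b$ ones. Because $A$ is simple, the number of light columns is therefore at most the number of all $r$-ary columns of height $m$ having fewer than $a$ zeros or fewer than $b$ ones. Applying inclusion--exclusion to the events ``at most $a-1$ zeros'' and ``at most $b-1$ ones''---where the first contributes $\sum_{\ell=0}^{a-1}\binom{m}{\ell}(r-1)^{m-\ell}$, the second $\sum_{k=0}^{b-1}\binom{m}{k}(r-1)^{m-k}$, and their intersection $\sum_{\ell=0}^{a-1}\sum_{k=0}^{b-1}\binom{m}{\ell}\binom{m-\ell}{k}(r-2)^{m-\ell-k}$---reproduces exactly the first three terms of \eqref{eqn:vertical-block-matrix}.

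It then remains to bound the heavy columns by $(p-1)\binom{m}{a}\binom{m-a}{b}$. Here I would double count pairs consisting of a heavy column together with a pattern it covers. A heavy column with $z\ge a$ zeros and $w\ge b$ ones covers $\binom{z}{a}\binom{w}{b}\ge 1$ patterns, so the number of heavy columns is at most the total number of such incident pairs. Summing instead over patterns, and using that each of the $\binom{m}{a}\binom{m-a}{b}$ patterns is covered at most $p-1$ times (light columns contributing nothing), bounds this total by $(p-1)\binom{m}{a}\binom{m-a}{b}$ and completes the upper bound. For the matching lower bound I would take all light columns together with, for each pattern $(R_0,R_1)$, exactly $p-1$ columns that are $0$ on $R_0$, $1$ on $R_1$, and take values in $\{2,\dots,r-1\}$ elsewhere; the hypothesis $(r-2)^{m-a-b}\ge p-1$ guarantees enough distinct such fillings, and all appended columns are distinct from one another and from the light columns. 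One checks that each appended column covers only its own pattern, so every pattern is covered exactly $p-1$ times and $F$ is avoided, giving a configuration of the desired size.

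The one place demanding care is the interface between the two column types: verifying that light columns cover no pattern---so that the light-column count and the per-pattern multiplicity bound do not interfere---and that in the construction each appended heavy column covers its own pattern and no other (which follows because such a column has zeros exactly on $R_0$ and ones exactly on $R_1$). Once these are in place, the inequality $\binom{z}{a}\binom{w}{b}\ge 1$ makes the double count collapse cleanly, and the inclusion--exclusion bookkeeping matching the first three terms is routine.
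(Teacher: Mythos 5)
Your proposal is correct and follows essentially the same route as the paper: the first three terms come from the inclusion--exclusion count of columns with fewer than $a$ zeros or fewer than $b$ ones, the remaining columns are bounded by double counting column--pattern incidences (the paper phrases this as a ``mark argument'' on ordered disjoint pairs $(X,Y)$), and the matching construction appends, for each pattern, $p-1$ columns with entries in $\{2,\dots,r-1\}$ outside $R_0\cup R_1$, exactly as the paper does. No gaps.
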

\begin{proof}
Any maximal matrix that avoids $F \coloneqq \sbb \mathbf{0}_{a\times p} \\ \mathbf{1}_{b\times p} \seb$ contains all columns that have fewer than $a$ zeros or fewer than $b$ ones. This accounts for the first three terms of \eqref{eqn:vertical-block-matrix}. Thus we need only bound the number of columns that contain at least $a$ zeros and at least $b$ ones. There are $(r-2)^{m-a-b}$ columns that contain exactly $a$ zeros and $b$ ones for a fixed $a$-set $X$ and $b$-set $Y$ of rows. If $(r-2)^{m-a-b} \geq p-1$, then for each disjoint $X, Y \subseteq [m]$ with $\lvert X \rvert = a$ and $\lvert Y \rvert = b$, we may take $p-1$ columns with 0's in the rows in $X$ and 1's in the rows of $Y$ and entries in $\{2,\dots,r-1\}$ elsewhere. This is $(p-1)\binom{m}{a}\binom{m-a}{b}$ columns, which provides the lower bound.

For the upper bound, we again use a mark argument. Consider the set of ordered pairs $(X,Y)$ where $X, Y \subseteq [m]$ are disjoint, $\lvert X \rvert = a$, and $\lvert Y \rvert = b$. Given a matrix $A$, we place a mark on the pair $(X,Y)$ for every column $\mathfrak{c} \in A$ such that $\mathfrak{c}\vert_X$ contains only zeros and $\mathfrak{c}\vert_Y$ contains only ones. There can be at most $(p-1)\binom{m}{a}\binom{m-a}{b}$ marks in total if the matrix $A$ avoids $F$. Every column that contains at least $a$ zeros and $b$ ones contributes at least one mark, so there are at most $(p-1)\binom{m}{a}\binom{m-a}{b}$ such columns, which gives the upper bound.
\end{proof}

\begin{corollary}\label{thm:0-1vector}
If $(r-2)^{m-2} \geq p-1$, then
\[
    \forb(m,r, p \cdot \sbb 0  \\ 1 \seb) = 2(r-1)^m - (r-2)^m + (p-1)m(m-1).
\]
\end{corollary}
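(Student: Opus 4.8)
The plan is to read off this statement as the special case $a = b = 1$ of \cref{thm:vertical-block}. The configuration $p\cdot\sbb 0\\ 1\seb$ is exactly the vertical block matrix $\sbb \mathbf{0}_{a\times p}\\ \mathbf{1}_{b\times p}\seb$ in which a single row of zeros sits atop a single row of ones, so $a=b=1$; moreover, the hypothesis $(r-2)^{m-2}\ge p-1$ is precisely the condition $(r-2)^{m-a-b}\ge p-1$ required by that proposition. Hence I would simply invoke \cref{thm:vertical-block} and specialize its formula.

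Substituting $a=b=1$ into \eqref{eqn:vertical-block-matrix}, each of the two single sums ranges over the lone index $\ell=0$ (respectively $k=0$) and collapses to $\binom{m}{0}(r-1)^m=(r-1)^m$; the double sum collapses to its only term $\binom{m}{0}\binom{m}{0}(r-2)^m=(r-2)^m$; and the final term becomes $(p-1)\binom{m}{1}\binom{m-1}{1}=(p-1)m(m-1)$. Adding these gives $2(r-1)^m-(r-2)^m+(p-1)m(m-1)$, which is the asserted value.

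There is essentially no obstacle here: the entire content lies in \cref{thm:vertical-block}, and the corollary follows purely by collapsing the empty-range and single-term sums. The only thing worth verifying carefully is the bookkeeping that the hypothesis and all four terms specialize correctly for $a=b=1$, which they do.
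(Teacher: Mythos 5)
Your proof is correct and is exactly the paper's intended route: the corollary is stated immediately after \cref{thm:vertical-block} with no separate proof precisely because it is the specialization $a=b=1$, and your bookkeeping of the collapsed sums and of the hypothesis $(r-2)^{m-a-b}\geq p-1$ checks out.
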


\section{Forbidden configurations with 2 rows}\label{sec:two-rows}

We define the general 2-rowed $(0,1)$--forbidden configuration
\begin{equation}
    F(a,b,c,d) =
    \bigg[
    \underbrace{
        \begin{matrix} 0 & \cdots & 0 \\ 0 & \cdots & 0 \end{matrix}
        }_a
    \underbrace{
        \begin{matrix} 1 & \cdots & 1 \\ 0 & \cdots & 0 \end{matrix}
        }_b
    \underbrace{
        \begin{matrix} 0 & \cdots & 0 \\ 1 & \cdots & 1 \end{matrix}
        }_c
    \underbrace{
        \begin{matrix} 1 & \cdots & 1 \\ 1 & \cdots & 1 \end{matrix}
        }_d
    \bigg].
\end{equation}

Our main tools will be two reduction lemmas.

\begin{lemma}[Reduction Lemma for $r > 3$]\label{thm:reduction-r>3}
Suppose $b,c \geq 1$ and set $b' = \min\{b,c\}$. If $(r-2)^{m-2} \geq 2(\max\{b,c\} - 1)$, then
\[ \forb\!\big(m,r,F(a,b,c,d)\big) = \forb\!\big(m,r,F(a,b',b',d)\big). \]
\end{lemma}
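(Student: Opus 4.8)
The plan is to assume, up to swapping the two rows of $F$ (which leaves the forbidden number unchanged), that $b \le c$, so that $b' = b$ and $\max\{b,c\} = c$, and the hypothesis reads $(r-2)^{m-2} \ge 2(c-1)$. One inequality is immediate: since $b' \le b$ and $b' \le c$, selecting $b'$ of the $b$ columns $\sbb 1 \\ 0\seb$ and $b'$ of the $c$ columns $\sbb 0 \\ 1 \seb$ of $F(a,b,c,d)$ witnesses $F(a,b',b',d) \prec F(a,b,c,d)$, so by monotonicity of $\forb$ under $\prec$ we get $\forb(m,r,F(a,b,b,d)) \le \forb(m,r,F(a,b,c,d))$. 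The whole content of the lemma is therefore the reverse inequality $\forb(m,r,F(a,b,c,d)) \le \forb(m,r,F(a,b,b,d))$.

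To detect when a two-rowed $(0,1)$-configuration appears, I would record, for each \emph{ordered} pair of rows $(i,j)$, the four multiplicities $\mu_{00},\mu_{10},\mu_{01},\mu_{11}$ counting the columns that are binary on rows $i,j$ with the indicated pattern (row $i$ on top); then $F(a,\beta,\gamma,d)$ appears on $(i,j)$ exactly when $\mu_{00}\ge a$, $\mu_{10}\ge\beta$, $\mu_{01}\ge\gamma$, and $\mu_{11}\ge d$, and ranging over ordered pairs accounts for row permutations. Fix an extremal $A \in \Avoid(m,r,F(a,b,c,d))$ and call $(i,j)$ \emph{bad} if $F(a,b,b,d)$ appears on it, i.e. $\mu_{00}\ge a$, $\mu_{11}\ge d$, and $\mu_{10},\mu_{01}\ge b$. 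Since $A$ avoids $F(a,b,c,d)$ on both $(i,j)$ and $(j,i)$, a bad pair must satisfy $\mu_{10}\le c-1$ and $\mu_{01}\le c-1$, so it carries at most $2(c-1)$ off-diagonal binary columns.

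The heart of the argument is to transform $A$ into a matrix $A'$ avoiding $F(a,b,b,d)$ that still has $|A|$ distinct columns, which yields $\forb(m,r,F(a,b,b,d)) \ge |A'| = |A| = \forb(m,r,F(a,b,c,d))$. The operation I would use is \emph{recoding}: on a bad pair $(i,j)$, take some of the columns counted by $\mu_{01}$ (value $0$ in row $i$) and replace that $0$ by a symbol in $\{2,\dots,r-1\}$, lowering $\mu_{01}$ below $b$ and destroying the copy of $F(a,b,b,d)$. Recoding is legitimate because it is \emph{monotone}: replacing a binary entry by a non-binary one only decreases every $\mu_{xy}$ on every pair through that row, so it can never create a new copy of any $(0,1)$-configuration and can only destroy copies of $F(a,b,b,d)$. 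No columns are deleted, so $A'$ retains all $|A|$ columns provided no two are made equal.

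The one genuine difficulty, and the reason for the hypothesis, is maintaining \emph{simplicity}: a recoded column must not collide with any other column. I expect the main work to lie here, and this is exactly where $(r-2)^{m-2} \ge 2(\max\{b,c\}-1)$ is used. Each bad pair contributes at most $2(c-1)$ columns to be recoded, while the $m-2$ rows outside the pair admit $(r-2)^{m-2}$ distinct non-binary fillings, so the hypothesis guarantees an injective choice of non-binary symbols keeping every recoded column distinct from the rest of the matrix and from the other recoded columns. Carrying this out carefully—choosing an order in which to process bad pairs so that the monotone recodings do not interfere, and checking that the room afforded by the hypothesis persists throughout—is the crux; once done, iterating until no bad pair remains produces the desired $A'$ and hence the reverse inequality.
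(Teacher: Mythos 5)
Your overall architecture --- the easy inequality from $F(a,b',b',d)\prec F(a,b,c,d)$, the observation that on any pair of rows carrying a copy of $F(a,b,b,d)$ each of the two off-diagonal binary patterns occurs at most $c-1$ times (since otherwise $F(a,b,c,d)$ would appear on one of the two orderings of that pair), and a monotone ``make binary entries non-binary'' surgery that destroys copies of $F(a,b,b,d)$ without losing columns --- is exactly the paper's. But the concrete operation you propose does not survive the point you yourself flag as the crux. Recoding a column by changing only its entry in row $i$ from $0$ to a symbol in $\{2,\dots,r-1\}$ leaves just $r-2$ candidate replacement columns, and all of them may already be present in $A$ (for $r=4$ there are only two candidates); the hypothesis $(r-2)^{m-2}\ge 2(\max\{b,c\}-1)$ says nothing about this operation, so injectivity can genuinely fail. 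Your later appeal to the ``$(r-2)^{m-2}$ distinct non-binary fillings of the $m-2$ outside rows'' counts the slots for a different operation than the one you defined, and you explicitly leave unresolved how to make the two meet.

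The paper's resolution is the missing piece: on the offending pair of rows, \emph{delete} all (at most $2(c-1)$) columns whose restriction to that pair is $\sbb 0 \\ 1 \seb$ or $\sbb 1 \\ 0 \seb$, and \emph{append} the same number of fresh columns that read $0$ and $1$ on those two rows and have entries in $\{2,\dots,r-1\}$ in every other row. There are $(r-2)^{m-2}\ge 2(c-1)$ such columns and none remain in the matrix after the deletion, so the column count and simplicity are preserved. The new columns produce only $\sbb 0 \\ 1 \seb$ and no $\sbb 1 \\ 0 \seb$ on the processed pair, so (since $b\ge 1$ forces $I_2\prec F(a,b,b,d)$) no copy of $F(a,b,b,d)$ survives there; and being non-binary off that pair they contribute no binary pattern to any other pair of rows, so no new copies arise and the iteration terminates. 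Without this (or an equivalent) specification of the surgery, the argument is incomplete at its decisive step.
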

\begin{proof}
If $b=c$ the statement is trivial, so suppose without loss of generality that $b < c$. We set $F\coloneqq F(a,b,c,d)$ and $F' = F(a,b,b,d)$. It follows from $F' \prec F$ that $\forb(m,r,F') \leq \forb(m,r,F)$. To prove the reverse inequality, we want to show that $\lvert A\rvert \leq \forb(m,r,F')$ for every $A \in \Avoid(m,r,F)$. This is true if $A$ avoids $F'$, so suppose instead that $F' \prec A$. By permuting the rows of $A$, we may assume that some instance of $F'$ appears in its first two rows. We write $A$ in the block form
\begin{equation}
    A = \begin{bmatrix}
    0 & 0 & 1 & \cdots & r-1\\
    0 & 1 & 0 & \cdots & r-1 \\
    A_{0,0} & A_{0,1} & A_{1,0} & \cdots & A_{r-1,r-1}
    \end{bmatrix}.
\end{equation}

Because $F'$ appears in the first two rows, we know that $\lvert A_{0,0} \rvert \geq a$, that $\lvert A_{0,1}\rvert, \lvert A_{1,0}\rvert \geq b$, and that $\lvert A_{1,1}\rvert \geq d$. If either of $A_{0,1}$ or $A_{1,0}$ contains at least $c$ columns, then $A$ contains $F$ in the first two rows. But $A$ avoids $F$, so $\lvert A_{1,0}\rvert, \lvert A_{0,1}\rvert < c$. We assumed that $(r-2)^{m-2} \geq 2 (c - 1)$, so it is possible to delete the columns with $\sbb 0 \\ 1\seb$ or $\sbb 1 \\ 0 \seb$ in the first two rows and append $\lvert A_{0,1}\rvert + \lvert A_{1,0}\rvert$ distinct columns $\mathfrak{c}$ to $A$ with $\mathfrak{c}_1 = 0$, $\mathfrak{c}_2 = 1$, and $\mathfrak{c}_i \notin\{0,1\}$ for $i > 2$. The resulting configuration does not contain $I_2$ in its first two rows, so it does not contain $F'$ in the first two rows, either. Moreover, this operation does not create a new instance of $F'$ in $A$.

Iterating this process for every appearance of $F'$ in $A$ produces a matrix with the same number of columns as $A$ that avoids $F'$. Thus $\lvert A\rvert \leq \forb(m,r,F')$, as desired.
\end{proof}

\begin{theorem}[Forbidden numbers for $2$-rowed $(0,1)$-matrices with $r > 3$] Let $F = F(a,b,c,d)$ and $\alpha = \max\{a,d, \min\{b,c\}\}$, and suppose $(r-2)^{m-2} \geq 2 \max\{a,b,c,d\}$. If $\alpha > 0$, then
\begin{equation}\label{eqn:2-rowed-r>3}
    \forb\!\big(m,r,F(a,b,c,d)\big) = m(r-1)^{m-1} + (r-1)^m + (\alpha - 1)\binom{m}{2}.
\end{equation}
Otherwise, $F = p\cdot \sbb 0 \\ 1 \seb$ and 
\begin{equation}
\forb(m,r,p \cdot \sbb 0 \\ 1 \seb) = 2(r-1)^m - (r-2)^m + (p-1)m(m-1).
\end{equation}
\end{theorem}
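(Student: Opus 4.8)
The plan is to dispatch the degenerate case first and then to reduce everything else to three configurations whose forbidden numbers are already known. When $\alpha = 0$ we have $a = d = 0$ and $\min\{b,c\} = 0$, so one of $b,c$ vanishes and, up to a row swap, $F = p \cdot \sbb 0 \\ 1 \seb$ with $p = \max\{b,c\}$. Since the standing hypothesis $(r-2)^{m-2} \ge 2\max\{a,b,c,d\}$ forces $(r-2)^{m-2} \ge p-1$, the value is exactly the content of \cref{thm:0-1vector}, and nothing further is needed.

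For $\alpha \ge 1$ the key observation is that the target value is simultaneously realised by three complete configurations: from \cref{thm:complete-config,thm:complete-uniform,thm:identity-r>3} one gets
\[ \forb(m,r,\alpha K_2) = \forb(m,r,\alpha\cdot I_2) = \forb(m,r,\alpha\cdot K_2^0) = (r-1)^m + m(r-1)^{m-1} + (\alpha-1)\binom{m}{2}, \]
and the hypothesis on $m$ and $r$ is exactly what each of those results requires. The strategy is then to squeeze $\forb(m,r,F)$ between a suitable ``pure'' subconfiguration of $F$ and the complete configuration $\alpha K_2$. Using the complementation identity $F(a,b,c,d)^c = F(d,c,b,a)$ together with the row swap $F(a,b,c,d) = F(a,c,b,d)$, I would first arrange $a \ge d$ and $b \ge c$, so that $\alpha = \max\{a,c\}$. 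For the lower bound: if $a \ge c$ then $\alpha\cdot K_2^0 = F(a,0,0,0) \prec F$, and if $c > a$ then $\alpha\cdot I_2 = F(0,c,c,0) \prec F$ (using $c \le b$); either way $\forb(m,r,F) \ge (r-1)^m + m(r-1)^{m-1} + (\alpha-1)\binom{m}{2}$.

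The upper bound is where \cref{thm:reduction-r>3} does the work, and also where the one genuine obstacle lives. When $c \ge 1$ the reduction lemma gives $\forb(m,r,F) = \forb(m,r,F(a,c,c,d))$; since every parameter of $F(a,c,c,d)$ is at most $\alpha$, we have $F(a,c,c,d) \prec \alpha K_2$, and the squeeze closes. The case not covered by this---the step I expect to be hardest---is $c = 0$ with $b > a = \alpha$: here \cref{thm:reduction-r>3} is unavailable (it needs $b,c \ge 1$) and $F \not\prec \alpha K_2$, so neither half of the squeeze is directly at hand. I would resolve it by inflating the empty block by one: since $F(a,b,0,d) \prec F(a,b,1,d)$ and $F(a,b,1,d)$ satisfies $\min\{b,1\} = 1 \ge 1$ with the same invariant $\alpha = \max\{a,d,1\} = a$, the $c \ge 1$ argument applies verbatim to $F(a,b,1,d)$ and gives $\forb(m,r,F(a,b,0,d)) \le \forb(m,r,F(a,b,1,d)) = (r-1)^m + m(r-1)^{m-1} + (\alpha-1)\binom{m}{2}$. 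Finally I would confirm that $(r-2)^{m-2} \ge 2\max\{a,b,c,d\}$ dominates every threshold invoked along the way (each use of \cref{thm:reduction-r>3,thm:complete-config,thm:complete-uniform,thm:identity-r>3} needs only that $(r-2)^{m-2}$ or $(r-1)^{m-2}$ exceed a quantity bounded by $2\max\{a,b,c,d\}$), so that the matching bounds combine to the claimed equality.
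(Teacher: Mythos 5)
Your proposal is correct and follows essentially the same route as the paper: both use the Reduction Lemma to equalize $b$ and $c$, then sandwich $F$ between complete configurations ($\alpha\cdot K_2^0$ or $\alpha\cdot I_2$ below, $\alpha\cdot K_2$ above) whose forbidden numbers coincide, and both handle the $b=0$ or $c=0$ case by embedding $F$ into $F(a,b,1,d)$. Your organization (normalize by complementation and row swap first, then run a single squeeze) is a slightly cleaner packaging of the paper's three cases, but the underlying argument is the same.
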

\begin{proof}
The case $F = p \cdot \sbb 0 \\ 1 \seb$ is given by \cref{thm:0-1vector}. We prove the statement for $\alpha > 0$ in cases. 

\textit{Case 1:} $\alpha = a$ and $b,c \geq 1$. By \cref{thm:reduction-r>3}, we may assume that $b=c$. Then $\mathbf{0}_{2\times a} \prec F$ and $F$ is $a$-simple, so the statement follows from \cref{thm:p-simple-with-p-block}. Taking the $(0,1)$-complement of $F$ handles the case $\alpha = d$ with $b,c\geq 1$.

\textit{Case 2:} $\alpha = \min\{b,c\}$. This implies $b,c\geq 1$, so by \cref{thm:reduction-r>3}, we may assume $b=c$. Then $c \cdot I_2 \prec F \prec c\cdot K_2$, so the upper bound follows from \cref{thm:complete-config} and the lower bound from \cref{thm:identity-r>3} with $k=2$.

\textit{Case 3:} $b=0$ or $c=0$. By possibly taking the complement, we may assume that $b=0$. Since the arguments are symmetric, suppose $a \geq d$, which implies that $\alpha = a$. Then $F \prec F(a,1,\max\{1,c\},d)$, so by \cref{thm:reduction-r>3},
\[  \forb(m,r,a\cdot \mathbf{0}_2)
    \leq \forb(m,r,F)
    \leq \forb(m,r,F(a,1,1,d)). \]
The lower and upper bounds are equal by \cref{thm:p-simple-with-p-block} and Case 1.
\end{proof}

Proving a reduction lemma for $r=3$ requires a different approach.

\begin{lemma}[Reduction Lemma for $r=3$]\label{thm:reduction-r=3}
Let $b' = \min\{b,c\}$. If $2^{m-2} \geq (\max\{a,b,c,d\}-1)m^2$ and $b' \geq 1$, then
\[ \forb\!\big(m,3,F(a,b,c,d)\big) = \forb\!\big(m,3,F(a,b',b',d)\big). \]
\end{lemma}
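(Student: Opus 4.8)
The plan is to mirror the Reduction Lemma for $r>3$ (\cref{thm:reduction-r>3}) while confronting the obstruction caused by $r=3$ having only a single non-binary symbol. As there, assume without loss of generality that $b<c$ (if $b=c$ the claim is trivial), write $F\coloneqq F(a,b,c,d)$ and $F'\coloneqq F(a,b,b,d)$, and observe that $F'\prec F$ gives $\forb(m,3,F')\le\forb(m,3,F)$ for free. It remains to prove $\lvert A\rvert\le\forb(m,3,F')$ for every $A\in\Avoid(m,3,F)$. The device used for $r>3$ — deleting the $\sbb 0\\1\seb$ and $\sbb 1\\0\seb$ columns in a distinguished pair of rows and re-appending the same number of columns with distinct non-binary tails — fails immediately, since the only tail with no binary entry is the all-$2$ column.

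First I would isolate the operation that replaces the appending step. Changing an entry of $A$ from $0$ or $1$ to $2$ (\emph{neutralizing}) destroys binary sub-patterns and creates none, so it can never produce a copy of $F$ or of $F'$; moreover it leaves the number of columns unchanged. The plan is therefore to destroy every copy of $F'$ purely by neutralizing entries. I would then quantify the cost: if $F'$ occurs in a pair of rows, the diagonal thresholds $a$ and $d$ are met and both off-diagonal counts are at least $b$; since $A$ avoids $F$ together with its row-swap $F(a,c,b,d)$, neither off-diagonal count can reach $c$, so each is at most $c-1$, and neutralizing the smaller type down below $b$ costs at most $c-b\le\max\{a,b,c,d\}-1$ changes and removes $F'$ from that pair. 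Summing over all $\binom{m}{2}$ pairs bounds the total number of neutralizations by $(\max\{a,b,c,d\}-1)\binom{m}{2}\le(\max\{a,b,c,d\}-1)m^2/2$.

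The crux — and the reason the hypothesis is quantitative — is maintaining \emph{simplicity}. Neutralizing the row-$i$ or row-$j$ entry of a minority column may duplicate a column already present, and for each edit I must choose a neutralization (or, as a fallback, a delete-and-reinsert into a fresh column rich in $2$'s) that keeps all columns distinct and does not raise any pair's off-diagonal counts back to the $F'$-threshold. Here the bound $2^{m-2}\ge(\max\{a,b,c,d\}-1)m^2$ is precisely calibrated: the number of edits is at most $\sim(\max\{a,b,c,d\}-1)m^2$, while the reservoir of columns carrying $2$'s in a chosen pair of rows has size $2^{m-2}$, so a greedy argument can always route each edit into an unused, safe column. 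I would formalize this with an injection from the required edits into that reservoir, verifying that the re-inserted columns keep every pair below its $F'$-threshold.

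I expect the main obstacle to be exactly this global bookkeeping: simultaneously (i)~clearing $F'$ from all $\binom{m}{2}$ pairs, (ii)~preserving the column count, (iii)~keeping the matrix simple, and (iv)~never resurrecting $F'$ through a neutralized or re-inserted column. Conditions (iii) and (iv) pull against each other — the columns that are safe in every pair are those with very few binary entries, of which there are only polynomially many, whereas simplicity forces the use of many distinct columns — and it is the exponential gap $2^{m-2}\ge(\max\{a,b,c,d\}-1)m^2$ that reconciles them and makes a greedy or counting resolution possible.
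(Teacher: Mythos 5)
You have correctly located the single hard point of the argument, but that point is exactly where the proposal stops, and I do not believe the greedy/injection step you defer to can be carried out as described. Pure neutralization (turning a $0$ or $1$ into a $2$) is indeed monotone --- it destroys binary patterns and creates none --- but it can get stuck: if a column's only binary entries are in the distinguished pair of rows, all of its neutralized versions may already be present in $A$, so simplicity forces you into the delete-and-reinsert fallback, and there monotonicity is lost. A reinserted column is harmless for \emph{every} pair of rows only if it has at most one binary entry, and there are just $2m+1$ such columns, all of which may already lie in $A$. Any other column, however rich in $2$'s, has two binary entries in some pair of rows and raises one of that pair's four pattern counts; since $F'=F(a,b',b',d)$ has strictly weaker thresholds than $F$, the hypothesis that $A$ avoids $F$ gives no protection --- a pair whose $\sbb 0 \\ 0 \seb$-count is $a-1$ while its other three counts already meet the thresholds of $F'$ acquires a copy of $F'$ after a single unlucky insertion. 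So the reservoir you count ($2^{m-2}$ columns with $2$'s in the chosen pair) is not a reservoir of \emph{safe} columns; the genuinely safe ones number only $O(m)$, and the exponential hypothesis does not reconcile the tension you yourself flag between (iii) and (iv).

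The paper's proof is structurally different and sidesteps all of this: it never edits $A$ into an $F'$-avoider. Instead it shows directly that if $F'\prec A$ and $A\in\Avoid(m,3,F)$, then $\lvert A\rvert\le\forb(m,3,I_2)\le\forb(m,3,F')$ (using $b'\ge 1$ for the last inequality). Fixing two rows witnessing $F'$, it decomposes $A$ into the nine blocks indexed by the entries in those rows, uses neutralization only for a harmless normalization ($B\subseteq D\subseteq H$, etc.), bounds the columns carrying a $2$ in one of the two rows by $2\forb(m-1,3,F)$ via row deletion, bounds the binary diagonal blocks by $\lvert B\rvert+\lvert C\rvert\le\lvert H\rvert+\lvert B\cap C\rvert$ with $\lvert B\cap C\rvert\le 2^{m-2}+(p-1)(m-2)$ because $A$ avoids $p\cdot K_2$ (where $p=\max\{a,b,c,d\}$), and notes the off-diagonal binary blocks contribute at most $2(c-1)$. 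The hypothesis $2^{m-2}\ge(p-1)m^2$ enters only at the very end, to absorb the $(p-1)m^2$ error term into the slack below $\forb(m,3,I_2)$ --- a role entirely different from the calibration your sketch assigns it. To make a surgery-based proof work you would have to exhibit the injection into globally safe columns explicitly, and the counts above indicate that no such injection exists in general.
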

\begin{proof}
Let $p = \max\{a,b,c,d\}$, so that $F\coloneqq F(a,b,c,d)$ is $p$-simple, and set $F' = F(a,b',b',d)$. As above, $\forb(m,r,F') \leq \forb(m,r,F)$ follows from the observation that $F' \prec F$.

Now let $A \in \Avoid(m,3,F)$. We want to show that $\lvert A \rvert \leq \forb(m,r,F')$. If $A$ does not contain $F'$, this is clear, so we assume that $F' \prec A$. We write $A$ in block form as
\begin{equation}
    A = \left[
\begin{matrix}
        0 & 1 \\
        1 & 0 \\
        A_{0,1} & A_{1,0\,}
    \end{matrix}\right\vert \left.
    \begin{matrix}
        0 & 1 & 0 & 1 & 2 & 2 & 2 \\
        0 & 1 & 2 & 2 & 0 & 1 & 2 \\
        B & C & D & E & F & G & H
    \end{matrix}\right].
\end{equation}
By possibly taking the complement of $F$, we may assume that $b \leq c$. Moreover, since the statement is trivial if $b = c$, we assume that strict inequality holds. Since $F' \prec A$ but $F \nprec A$, we have that $b \leq \lvert A_{0,1} \rvert, \lvert A_{1,0} \rvert < c$. If there is a column in $B$ that is not in $D$, then we may delete the column $\sbb 0 \\ 0 \\ v \seb$ and insert the column $\sbb 0 \\ 2 \\ v \seb$ without introducing $F$ as a configuration. By replacing binary digits in the first two rows with 2's in this manner, we may assume that\vspace{-2em}
\[  \begin{minipage}{0.3\textwidth}
        \begin{align*}
            B \subseteq D \subseteq H \\
            B \subseteq F \subseteq H
        \end{align*}
    \end{minipage}
    \raisebox{-0.5em}{and}
    \begin{minipage}{0.3\textwidth}
        \begin{align*}
            &C \subseteq E \subseteq H \\
            &C \subseteq G \subseteq H.
        \end{align*}
    \end{minipage} \]

The matrices $\sbb 0 & 1 & 2 \\ D & E & H \seb$ and $\sbb 0 & 1 & 2 \\ F & G & H \seb$ both avoid $F$, so $\lvert DEH \rvert + \lvert FGH \rvert \leq 2\forb(m-1,3,F)$. Also, $B \cup C \subseteq H$. From inclusion-exclusion, $\lvert B \rvert + \lvert C \rvert - \lvert H \rvert \leq \lvert B \cap C\rvert$. Because $A$ avoids $p\cdot K_2$, we know that $B \cap C$ avoids $p\cdot K_1$, so $\lvert B \cap C \rvert \leq 2^{m-2} + (p-1)(m-2)$. Therefore
\[ \lvert A \rvert \leq 
    2(c-1) + 2\forb(m-1,3,F) + 2^{m-2} + (p-1)(m-2). \]
If $2^{m-2} \geq (p-1)m^2$, then \cref{thm:complete-config} implies
\begin{align*}
    \lvert A \rvert
    &\leq 2(p-1) + 2 \forb(m-1,3,p\cdot K_2) + 2^{m-2} + (p-1)(m-2)\\
    &= (m-1)2^{m-1} + 2^m + 2^{m-2} + (p-1)m(m-1) + (p-1)m\\
    &\leq m2^{m-1} + 2^m
    = \forb(m,3,I_2).
\end{align*}
Since $I_2 \prec F'$, this shows that $\lvert A \rvert \leq \forb(m,3,F')$, completing the proof.
\end{proof}

\begin{theorem}
Suppose $\max\{a,d\} \geq \min\{b,c\}$. If $2^{m-2}\geq (\max\{a,b,c,d\}-1)m^2$, then
\begin{equation}
\forb\!\big(m,3,F(a,b,c,d)\big) = (r-1)^m + m(r-1)^{m-1} + (\max\{a,d\}-1)\binom{m}{2}.
\end{equation}
\end{theorem}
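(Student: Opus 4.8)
The plan is to prove that, under these hypotheses, $\forb(m,3,F(a,b,c,d))$ coincides with $\forb(m,3,\alpha\cdot K_2)$, where $\alpha=\max\{a,d\}$; by \cref{thm:complete-config}, whose hypothesis $(r-1)^{m-k}\geq p-1$ reads $2^{m-2}\geq\alpha-1$ and is implied by our assumption $2^{m-2}\geq(\max\{a,b,c,d\}-1)m^2$, this equals $2^m+m2^{m-1}+(\alpha-1)\binom{m}{2}$, which is the claimed value at $r=3$. Since $\forb(m,3,F)=\forb(m,3,F^c)$ and complementation sends $F(a,b,c,d)$ to $F(d,c,b,a)$, swapping $a\leftrightarrow d$ and $b\leftrightarrow c$ while fixing both $\max\{a,d\}$ and $\min\{b,c\}$, I may assume throughout that $a\geq d$, so that $\alpha=a\geq 1$ (the degenerate case $a=d=0$ collapses to a single repeated column and is governed by \cref{thm:0-1vector} rather than by this formula).

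For the upper bound I would mirror the case structure of the $r>3$ theorem, now driven by \cref{thm:reduction-r=3}. If $\min\{b,c\}\geq 1$, the reduction lemma gives $\forb(m,3,F(a,b,c,d))=\forb(m,3,F(a,b',b',d))$ with $b'=\min\{b,c\}$; since $a,d\leq\alpha$ and $b'=\min\{b,c\}\leq\max\{a,d\}=\alpha$ by hypothesis, every column multiplicity of $F(a,b',b',d)$ is at most $\alpha$, whence $F(a,b',b',d)\prec\alpha\cdot K_2$ and $\forb(m,3,F(a,b',b',d))\leq\forb(m,3,\alpha\cdot K_2)$. If instead $\min\{b,c\}=0$, say $b=0$, then for $c=0$ we have $F(a,0,0,d)\prec\alpha\cdot K_2$ directly, while for $c\geq 1$ we use $F(a,0,c,d)\prec F(a,1,c,d)$ followed by \cref{thm:reduction-r=3} to reach $F(a,1,1,d)\prec\alpha\cdot K_2$; the reduction lemma's numerical hypothesis survives because $\max\{a,1,c,d\}=\max\{a,b,c,d\}$ in this situation. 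In every case the upper bound $\forb(m,3,F)\leq\forb(m,3,\alpha\cdot K_2)$ follows.

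The lower bound is where $r=3$ genuinely differs, and it is the step I expect to be the main obstacle: the exact evaluations of \cref{thm:complete-uniform,thm:vertical-block} that supplied the matching lower bound when $r>3$ require $(r-2)^{m-k}\geq p-1$, which is $1\geq\alpha-1$ and so fails the moment $\alpha\geq 3$. To sidestep this I would avoid those results entirely and instead exhibit an explicit $F$-avoiding configuration with the full $\forb(m,3,\alpha\cdot K_2)$ columns. The natural candidate is the extremal construction $A^{*}$ from the proof of \cref{thm:complete-config} with $k=2$ and $p=\alpha$: it consists of every column with at most one zero, together with, for each pair of rows, $\alpha-1$ appended columns having zeros in exactly that pair and entries in $\{1,2\}$ elsewhere (this many distinct columns exist because $2^{m-2}\geq\alpha-1$), for a total of exactly $2^m+m2^{m-1}+(\alpha-1)\binom{m}{2}$ columns.

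The key observation is that $A^{*}$ avoids $F$ for a transparent reason: in any pair of rows $\{i,j\}$, the only columns of $A^{*}$ with a $0$ in both rows are the $\alpha-1$ appended columns attached to $\{i,j\}$ itself, since the ``at most one zero'' columns never carry two zeros and the appended columns of every other pair place their two zeros elsewhere. Hence the all-zero pattern $\sbb 0\\0\seb$ occurs at most $\alpha-1=a-1<a$ times in each pair, so $\mathbf{0}_{2\times a}\nprec A^{*}$. Because $\mathbf{0}_{2\times a}\prec F(a,b,c,d)$ (the $a$ all-zero columns of $F$), this forces $F\nprec A^{*}$, giving $\forb(m,3,F)\geq\lvert A^{*}\rvert=\forb(m,3,\alpha\cdot K_2)$ and, with the upper bound, equality. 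The only remaining points are routine: the count that $A^{*}$ has the stated number of columns, and the verification that the reduction lemma's hypothesis persists through each application above, both immediate from the assumption $2^{m-2}\geq(\max\{a,b,c,d\}-1)m^2$.
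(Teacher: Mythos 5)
Your proposal is correct, and its skeleton is the same as the paper's: apply \cref{thm:reduction-r=3} to replace $F(a,b,c,d)$ by $F(a,b',b',d)$, then sandwich that configuration between an $\alpha$-fold repeated column and $\alpha\cdot K_2$. (The paper normalizes the opposite way, assuming $a\leq d$ and using $d\cdot\mathbf{1}_2 \prec F(a,b,b,d)\prec d\cdot K_2$; you assume $a\geq d$ and use $\mathbf{0}_{2\times\alpha}$, which is the same argument up to $(0,1)$-complementation.) The genuine difference is your treatment of the lower bound, and it is to your credit. The paper dispatches both bounding configurations with a single citation of \cref{thm:p-simple-with-p-block}, but the hypothesis of that corollary is $(r-2)^{m-k}\geq p-1$, which at $r=3$ reads $1\geq \alpha-1$ and fails for $\alpha\geq 3$ --- precisely the obstruction you identified. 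Your explicit configuration $A^{*}$ (all columns with at most one zero, plus $\alpha-1$ columns per pair with zeros exactly in that pair and entries in $\{1,2\}$ elsewhere, which exist since $2^{m-2}\geq\alpha-1$), together with the observation that it avoids $\mathbf{0}_{2\times\alpha}\prec F$, supplies the needed lower bound directly; it is exactly the complement of the construction the paper would need for $d\cdot\mathbf{1}_2$. So your route is not only valid but more careful at the one point where the paper's citation does not literally apply. The remaining details you defer (the column count of $A^{*}$, the persistence of the reduction lemma's numerical hypothesis, the row swap justifying ``say $b=0$'', and the degenerate case $\alpha=0$, which you rightly exclude) are all routine.
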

\begin{proof}
Since all arguments are symmetric, we assume that $a\leq d$ and $b \leq c$. If $b \geq 1$, then
\[ d\cdot \mathbf{1}_2 \prec F(a,b,b,d) \prec d\cdot K_2. \]
The forbidden numbers of both bounding configurations are equal by \cref{thm:p-simple-with-p-block}, and applying \cref{thm:reduction-r=3} shows that $F(a,b,b,d)$ and $F(a,b,c,d)$ have the same forbidden number. If $b = 0$ then $F(a,0,c,d) \prec F(a,1,\max\{1,c\},d)$. We have $d \cdot \mathbf{1}_2 \prec F(a,0,c,d)$, and by \cref{thm:reduction-r=3},
\[ \forb\!\big(m,3,F(a,1,\max\{1,c\},d)\big)
    = \forb\!\big(m,3,F(a,1,1,d)\big)
    \leq \forb(m,3,d \cdot K_2). \]
Again the upper and lower bounds are equal by \cref{thm:p-simple-with-p-block}.
\end{proof}

The remaining question is to evaluate $\forb\!\big(m,3,F(a,b,c,d)\big)$ when $\min\{b,c\} > \max\{a,d\}$. By the Reduction Lemma, we need only consider the case $b=c$. The smallest 2-rowed $(0,1)$-matrix whose forbidden number is not known when $r=3$ is 
\[  \begin{bmatrix}
    0 & 1 & 1 & 0 & 0 \\
    0 & 0 & 0 & 1 & 1
    \end{bmatrix}. \]

\begin{section}{Forbidden configurations with 3 rows}\label{sec:three-rows}

\cref{thm:01-equal} provides a handful of results on 3-rowed forbidden matrices for free.

\begin{corollary}\label{thm:3-row}
The following forbidden numbers are exact for all $r\geq 2$ when $m\geq 3$.
\[
\begin{array}{c | c }
    F & \forb(m,r,F)\\\hline
    \rule[-1em]{0pt}{2.5em}\sbb 0 \\ 1 \\ 1 \seb \text{ or } \sbb 1 & 1 \\ 0 & 1 \\ 0 & 0 \seb  & m(r-1)^{m-1} + 2(r-1)^m - (r-2)^m - m(r-2)^{m-1}\\\hline
    \rule{0pt}{1.5em}\sbb 1 & 0 \\ 0 & 1 \\ 0 & 0 \seb \text{ or } \sbb 1 & 0 & 1 & 0 \\ 0 & 1 & 1 & 1 \\ 0 & 0 & 0 & 1 \seb  & 2m(r-1)^{m-1} + (r-2)^m.
\end{array}
\]
\end{corollary}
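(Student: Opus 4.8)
The plan is to read each entry off \cref{thm:01-equal}: once we know the ordinary forbidden number $\forb(k,F)$ and can produce a single sequence $(A_k)_{k\ge 1}$ of extremal $F$-avoiding $(0,1)$-configurations with the self-similar property $\overline{A_k|_S}\subseteq A_3$ for every $S\in\binom{[k]}{3}$, the generalized forbidden number drops out as the binomial transform $\sum_{k=0}^m\binom mk(r-2)^{m-k}\forb(k,F)$. In each row of the table the left matrix is a configuration of the right one: the column $\sbb 1\\1\\0\seb$ of $\sbb 1&1\\0&1\\0&0\seb$ is a row permutation of $\sbb0\\1\\1\seb$, and the first two columns of the $3\times 4$ matrix are exactly $\sbb1&0\\0&1\\0&0\seb$. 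Hence a configuration that avoids the smaller matrix automatically avoids the larger one, and I would build just one sequence per row and let it do double duty for both entries.

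For the first row I take $A_k$ to consist of every column of weight at most one together with the all-ones column, so that $\lvert A_k\rvert=k+2$; since no column has two ones above a zero, $A_k$ avoids $\sbb0\\1\\1\seb$ and therefore both entries. Restricting to any three rows sends each weight-$\le 1$ column to a weight-$\le 1$ column and the all-ones column to $\mathbf 1$, so $\overline{A_k|_S}$ is always the fixed $5$-column configuration $\{\mathbf 0,e_1,e_2,e_3,\mathbf 1\}$. For the second row I take $A_k$ to be the chain $\mathbf 0\subset e_1\subset e_1+e_2\subset\cdots\subset\mathbf 1$ together with all $k$ columns of weight $k-1$; since the chain member $e_1+\cdots+e_{k-1}$ coincides with one such column, $\lvert A_k\rvert=2k$. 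Here every incomparable pair of columns has union $\mathbf 1$ (two weight-$(k-1)$ columns, or an incomparable chain--weight pair $e_1+\cdots+e_j$ and $\mathbf 1-e_i$ with $i\le j$), which is precisely the condition for avoiding $\sbb1&0\\0&1\\0&0\seb$. Restricting to three rows collapses the chain onto $\{\mathbf 0,e_1,e_1+e_2,\mathbf 1\}$ and the weight-$(k-1)$ columns onto the three weight-two columns and $\mathbf 1$, so $\overline{A_k|_S}$ is always the fixed $6$-column configuration consisting of every column except $e_2$ and $e_3$.

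Granting that these sequences are extremal, i.e.\ that $\forb(k,2,\cdot)$ equals $k+2$ (resp.\ $2k$) for $k\ge 3$, \cref{thm:01-equal} yields the closed forms by an elementary computation driven by the identities $\sum_k\binom mk(r-2)^{m-k}=(r-1)^m$ and $\sum_k k\binom mk(r-2)^{m-k}=m(r-1)^{m-1}$: the linear data $k+2$ and $2k$ transform into $2(r-1)^m+m(r-1)^{m-1}$ and $2m(r-1)^{m-1}$, respectively. The $(r-2)$-power corrections appear precisely because, for these three-rowed $F$, the convention forces $\forb(k,F)=2^k$ for $k<3$, which differs from the linear formula by $1$ at $k\in\{0,1\}$ in the first row and by $1$ at $k=0$ in the second; feeding these discrepancies through the transform produces exactly $-(r-2)^m-m(r-2)^{m-1}$ and $+(r-2)^m$, matching the table.

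The main obstacle is the one input I have so far only asserted: the sharp $r=2$ upper bounds. For the single column $\sbb0\\1\\1\seb$ this is immediate, since a configuration avoids a single column iff each of its columns does, so the count $k+2$ is exact. For the genuinely multi-column configurations $\sbb1&1\\0&1\\0&0\seb$, $\sbb1&0\\0&1\\0&0\seb$, and the $3\times 4$ matrix, I would either cite the classical two- and three-rowed forbidden numbers from Anstee's survey or give short standard-basis/shifting arguments showing $\forb(k,2,\cdot)\le k+2$ (resp.\ $\le 2k$); this is where the real case analysis lives, and it is also what certifies that the two entries in each row share a forbidden number. The only remaining point needing care is the nesting claim $\overline{A_k|_S}\subseteq A_3$, sketched above, which must be checked uniformly over the choice of three rows so that \cref{thm:01-equal} genuinely applies.
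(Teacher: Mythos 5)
Your proposal is correct and follows essentially the same route as the paper: your two sequences are literally the paper's extremal constructions $[\mathbf{0}_k\, I_k\, \mathbf{1}_k]$ and $U_k \cup I_k^c \cup \mathbf{0}_k$ (chain plus coweight-one columns), fed through \cref{thm:01-equal} with the same bookkeeping for the $\forb(k,F)=2^k$, $k<3$, corrections. The one input you defer --- the sharp $r=2$ upper bounds $k+2$ and $2k$ --- is exactly what the paper also outsources, citing Theorems 3.2 and 3.3 of its reference on small forbidden configurations rather than reproving them.
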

\begin{proof}
Theorem 3.2 of \cite{small-forbidden} proves that the configuration $A_m = [\mathbf{0}_m I_m \mathbf{1}_m]$ is extremal for the second matrix in the first row of the table when $r=2$, and it is not too hard to see that $A_m$ is extremal for the first matrix, as well. The sequence $(A_m)$ satisfies the conditions of \cref{thm:01-equal}, and $\lvert A_m \rvert= m+1$ if $m\in \{0,1\}$ and $\lvert A_m\rvert = m+2$ otherwise, so for either matrix $F$, we have
\[
    \forb(m,r,F) = \sum_{k=0}^m \binom{m}{k}(r-2)^{m-k}\forb(k,F);
\]
simplifying this sum yields the expression in the first row of the table.

Let $U_m$ denote the $m\times m$ upper-triangular matrix with 1's on and above the diagonal and 0's elsewhere. Theorem 3.3 of \cite{small-forbidden} proves that $A_m = U_m \cup I_m^c \cup \mathbf{0}_m$ is an extremal configuration (when $r=2$) for both matrices in the second row of the table. Both $U_m$ and $I_m^c$ have a column with exactly one 0 and are otherwise disjoint, so $\lvert A_m\rvert = 2m$ for $m\geq 1$, and $\lvert A_0 \rvert = 1$ by convention. As before, $(A_m)$ satisfies the conditions of \cref{thm:01-equal}, so for either forbidden matrix $F$,
\begin{align*}
    \forb(m,r,F)
    &= \sum_{k=0}^m \binom{m}{k}(r-2)^{m-k}\forb(k,F)\\
    &= \sum_{k=0}^m \binom{m}{k}(r-2)^{m-k}2k + (r-2)^m\\
    &= 2m(r-1)^{m-1} + (r-2)^m,
\end{align*}
as claimed.
\end{proof}

The matrices $\sbb 1 & 0 & 1 \\ 0 & 1  & 1\\ 0 & 0  & 0\seb$ and $\sbb 1 & 0 & 0 \\ 0 & 1  & 1\\ 0 & 0  & 1\seb$ are sandwiched between the two matrices in the second row of \cref{thm:3-row}, so they have the same forbidden number. Our results, together with $(0,1)$-complementation, evaluate the exact forbidden number for all $3\times 2$ and $3\times 3$ simple matrices for $r\geq 3$ except $F = \sbb 1 & 0 \\ 0 & 1 \\ 0 & 1 \seb$. We can, however, bound $\forb(m,r,F)$ using our previous results. Theorem 3.3 in \cite{anstee-preprint} states that $\forb(m,F) \leq \frac{3}{2}m + 1$; applications of our \cref{thm:upper-01-bound} and \cref{thm:complete-uniform} show that
\[ m(r-1)^{m-1} + (r-1)^m \,\leq\, \forb(m,r,F) \,\leq\, \frac{3}{2}m(r-1)^{m-1} + (r-1)^m. \]
\end{section}

\bibliography{bibliography}
\bibliographystyle{amsplain}

\end{document}